\theoremstyle{plain}
\newtheorem{Thm}{Theorem}[section]
\newtheorem{Lem}[Thm]{Lemma}
\newtheorem{Cor}[Thm]{Corollary}
\theoremstyle{definition}
\tikzstyle{vertex}=[circle, draw, inner sep=0pt, minimum size=6pt] 
\newcommand{\vertex}{\node[vertex]}
\newcommand{\CCC}{\mathcal{C}} 
\title{The phylogeny number in the aspect of triangles and diamonds of a graph}
\date{}
\author[1]{\small Soogang Eoh}
\author[1]{\small Suh-Ryung Kim}
\author[1]{\small Hojun Lee}
\affil[1]{\footnotesize Department of Mathematics Education, Seoul National University, Seoul 08826}
\affil[ ]{\footnotesize\textit{mathfish@snu.ac.kr, srkim@snu.ac.kr, rmaghwns@snu.ac.kr}}
\begin{document}
\maketitle
\begin{abstract}
The phylogeny graph of a digraph $D$, denoted by $P(D)$, is the graph with the vertex set $V(D)$ and has an edge $uv$ if and only if $(u,v)$ or $(v, u)$ is an arc of $D$ or $u$ and $v$ have a common out-neighbor in $D$.
The notion of phylogeny graphs was introduced by Roberts and Sheng~\cite{roberts1997phylogeny} as a variant of competition graph. Moral graphs having arisen from studying Bayesian networks are the same as phylogeny graphs.
Any acyclic digraph $D$ for which $G$ is an induced
subgraph of $P(D)$ and such that $D$ has no arcs from vertices outside of $G$ to vertices in $G$ is called a phylogeny digraph for $G$.

The phylogeny number $p(G)$ of $G$ is the smallest $r$ so that $G$ has a phylogeny digraph $D$ with $|V(D) \setminus V(G)|=r$.
In this paper, we integrate the existing theorems computing phylogeny numbers of connected graph with a small number of triangles into one proposition: for a graph $G$ containing at most two triangle, $
|E(G)|-|V(G)|-2t(G)+d(G)+1 \le p(G) \le |E(G)|-|V(G)|-t(G)+1
$ where $t(G)$ and $d(G)$ denote the number of triangles and the number of diamonds in $G$, respectively. Then we show that these inequalities hold for graphs with many triangles. In the process of showing it, we derive a useful theorem which plays a key role in deducing various meaningful results including a theorem that answers a question given by Wu~{\it et al.}~\cite{Wu2019}.
\end{abstract}
\noindent
{\bf Keywords:} competition graph; competition number; phylogeny graph; moral graph; phylogeny number

\noindent
{\bf 2010 Mathematics Subject Classification:}  05C20,  05C75, 94C15

\section{Introduction}

Given an acyclic digraph $D$, the \emph{competition graph} of $D$, denoted by $C(D)$, is the simple graph having vertex set $V(D)$ and edge set $\{uv \mid (u, w), (v, w) \in A(D) \text{ for some } w \in V(D) \}$.
Since Cohen~\cite{cohen1968interval} introduced the notion of competition graphs in the study on predator-prey concepts in ecological food webs, various variants of competition graph have been introduced and studied.

In the attempt to characterize the graphs that arise as competition graphs of acyclic
digraphs, Roberts~\cite{roberts1978food} noted that for every graph $G$, $G$ together with sufficiently many isolated vertices is a competition graph of some acyclic digraph.
The smallest $k$ so that $G$ together with $k$ newly added isolated vertices is a competition graph of an acyclic digraph is called the competition number of $G$ and is denoted by $k(G)$.

The notion of phylogeny graphs was introduced by Roberts and Sheng~\cite{roberts1997phylogeny} as a variant of competition graph. Given an acyclic digraph $D$, the \emph{underlying graph} of $D$, denoted by $U(D)$, is the simple graph with vertex set $V(D)$ and edge set $\{uv \mid (u, v) \in A(D) \text{ or }(v, u) \in A(D) \}$.
The \emph{phylogeny graph} of an acyclic digraph $D$, denoted by $P(D)$, is the graph with the vertex set $V(D)$ and edge set $E(U(D)) \cup E(C(D))$.

``Moral graphs'' having arisen from studying Bayesian networks are the same as phylogeny graphs.
One of the best-known problems, in the context of Bayesian networks, is related to the propagation of evidence.
It consists of the assignment of probabilities to the values of the rest of the variables, once the values of some variables are known. Cooper~\cite{cooper1990computational} showed that this problem is NP-hard.
Most noteworthy algorithms for this problem are given by Pearl~\cite{pearl1986fusion}, Shachter~\cite{shachter1988probabilistic} and by Lauritzen and Spiegelhalter~\cite{lauritzen1988local}. Those algorithms include a step of triangulating a moral graph, that is, adding edges to a moral graph to form a chordal graph.

Any acyclic digraph $D$ for which $G$ is an induced
subgraph of $P(D)$ and such that $D$ has no arcs from vertices outside of $G$ to vertices in $G$ is called a \emph{phylogeny digraph} for $G$.
The phylogeny number is defined analogously to the competition number.
The \emph{phylogeny number} $p(G)$ of $G$ is the smallest $r$ so that $G$ has a phylogeny digraph $D$ with $|V(D) \setminus V(G)|=r$.
A phylogeny digraph $D$ for a graph $G$ for which $|V(D) \setminus V(G)| = p(G)$ is called an \emph{optimal phylogeny digraph} for $G$.
Given an optimal phylogeny digraph $D$ for a graph $G$, we note that the digraph resulting from $D$ by deleting the arcs outgoing from a vertex in $V(D) \setminus V(G)$ is still an optimal phylogeny digraph for $G$.
In this vein, we may assume that outdegree of any vertex in $V(D) \setminus V(G)$ is zero for any optimal phylogeny digraph for a graph $G$~\cite{roberts1998phylogeny}.

Analogous to the competition number, the phylogeny number is closely related to the number of triangles as we may see from the following results.
\begin{Thm}[\cite{roberts1998phylogeny}]~\label{thm:no triangle}
If $G$ is a connected graph with no triangles, then
\[
p(G)=|E(G)|-|V(G)|+1.
\]
\end{Thm}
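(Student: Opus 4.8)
The plan is to prove the two inequalities $p(G)\le |E(G)|-|V(G)|+1$ and $p(G)\ge |E(G)|-|V(G)|+1$ separately. For the upper bound I would exhibit an explicit phylogeny digraph with $|E(G)|-|V(G)|+1$ added vertices. Since $G$ is connected, fix a spanning tree $T$ and a root $\rho$, and orient every edge of $T$ away from $\rho$ (from parent to child). In this orientation each vertex has a unique parent, so no two vertices of $V(G)$ share a common out-neighbor along tree arcs; hence the tree arcs contribute exactly the edges of $T$ to $P(D)$ and create no competition edges. For each of the $|E(G)|-(|V(G)|-1)$ non-tree edges $uv$ I would add one new vertex $w_{uv}$ together with the arcs $(u,w_{uv})$ and $(v,w_{uv})$, making $w_{uv}$ a sink. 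Then $u$ and $v$ compete at $w_{uv}$, so $uv$ becomes an edge of $P(D)$, while the only edges of $P(D)$ inside $V(G)$ are the tree edges and the non-tree edges, that is, exactly $E(G)$. The resulting digraph is acyclic and has no arcs leaving the added vertices, so it is a phylogeny digraph for $G$ with $|E(G)|-|V(G)|+1$ added vertices, giving the upper bound.

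For the lower bound, let $D$ be any phylogeny digraph for $G$; I may assume each added vertex is a sink, so every arc of $D$ originates in $V(G)$. Write $S=V(D)\setminus V(G)$ and $r=|S|$. The first observation is that, because $G$ is triangle-free, no edge of $G$ can be witnessed by a competition inside $V(G)$: if $u,v\in V(G)$ had a common out-neighbor $w\in V(G)$, then $uw,vw\in E(P(D))$ together with the competition edge $uv\in E(P(D))$ would force a triangle in the induced subgraph $G$. Consequently I would split $E(G)$ into the set $E_1$ of edges carried by an arc between their endpoints and the set $E_2=E(G)\setminus E_1$ of the remaining edges, each of which must be realized by a common out-neighbor lying in $S$.

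The two key estimates are then $|E_2|\le r$ and $|E_1|\le |V(G)|-1$. For the first, the in-neighbors of a fixed $w\in S$ that lie in $V(G)$ are pairwise adjacent in $P(D)$ and hence form a clique of $G$; triangle-freeness caps this clique at size two, so each added vertex witnesses at most one edge of $E_2$, yielding an injection from $E_2$ into $S$. For the second, I would consider the subdigraph $H$ consisting of the arcs of $D$ with both ends in $V(G)$. The same triangle argument shows every vertex has in-degree at most one in $H$, and since $H$ is acyclic the underlying graph of $H$ is a forest, whence $|E_1|=|E(H)|\le |V(G)|-1$. Combining, $r\ge |E_2|=|E(G)|-|E_1|\ge |E(G)|-|V(G)|+1$, which matches the upper bound and closes the proof.

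The main obstacle I expect is the lower bound, and within it the passage from ``in-degree at most one'' to ``forest'': the in-degree bound alone only gives $|E_1|\le|V(G)|$, and it is the acyclicity of $D$ that removes the one extra edge per component by forbidding a vertex whose two cycle-neighbors both point into it. Pinning down that \emph{every} competition witness for an $E_2$-edge is forced into $S$ (not merely that one such witness exists) is the other point that must be handled carefully, and both steps rely essentially on $G$ having no triangles.
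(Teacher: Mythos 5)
Your proof is correct. Note that the paper does not actually prove this statement---it is imported verbatim from \cite{roberts1998phylogeny} as Theorem~\ref{thm:no triangle}---so there is no in-paper argument to compare against; measured against the machinery the paper does develop, your upper bound (orient a spanning tree away from a root, then spend one sink per non-tree edge) is the standard construction, and your lower bound is an in-line reproof, specialized to triangle-free graphs, of the edge clique cover bound that the paper records as Lemma~\ref{lem:opsut}: since a triangle-free graph has $\theta_e(G)=|E(G)|$, that lemma yields $p(G)\ge |E(G)|-|V(G)|+1$ in one line, whereas you rederive it by splitting $E(G)$ into arc-realized and competition-realized edges. Both of your delicate points are handled soundly: the injectivity of the witness map $E_2\to S$ follows because the in-neighborhood of any vertex in $V(G)$ is a clique of the induced copy of $G$, hence has size at most two, and the passage from ``in-degree at most one'' to ``$|E_1|\le |V(G)|-1$'' genuinely needs acyclicity (an undirected cycle in $H$ would either be a directed cycle or contain a vertex of in-degree two), which you identify explicitly. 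The only cosmetic remark is that connectivity is used only in the upper bound; your lower bound holds for all triangle-free graphs, which is consistent with Lemma~\ref{lem:union}.
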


Given a graph $G$, we denote by $G^-$ the graph obtained from $G$ by deleting all the triangle edges of $G$ where a triangle edge means an edge on a triangle.

\begin{Thm}[\cite{roberts1998phylogeny}]~\label{thm:one triangle}
Let $G$ be a connected graph with exactly one triangle. Then
\[
p(G)=\begin{cases}
           |E(G)|-|V(G)| & \mbox{if $G^-$ has three components}; \\
           |E(G)|-|V(G)|-1 & \mbox{if $G^-$ has one or two components}.
         \end{cases}
         \]
\end{Thm}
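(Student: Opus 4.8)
The plan is to sandwich $p(G)$ between $|E(G)|-|V(G)|-1$ and $|E(G)|-|V(G)|$ by a counting argument on an optimal phylogeny digraph, and then to decide which of the two values is attained according to the number of components of $G^-$. Throughout I would fix an optimal phylogeny digraph $D$ in which every vertex of $S:=V(D)\setminus V(G)$ has outdegree $0$, so that every in-neighborhood $N_D^-(w)$ is contained in $V(G)$. The one fact I would lean on repeatedly is that $N_D^-(w)$ induces a clique of $P(D)$, hence (since $N_D^-(w)\subseteq V(G)$ and $G$ is induced) a clique of $G$; as $G$ has exactly one triangle it has no $K_4$, so $|N_D^-(w)|\le 3$, with $|N_D^-(w)|=3$ forcing $N_D^-(w)=\{x,y,z\}$ (the triangle) and $w\in S$, and with a vertex of $V(G)$ having in-degree $2$ only if it is a triangle vertex whose two in-neighbors are the other two. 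Acyclicity then shows that at most one vertex of $V(G)$ can have in-degree $2$.

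First I would set up the lower bound. Let $F$ be the graph on $V(G)$ whose edges are the arcs of $D$ lying inside $V(G)$; its number of edges equals $\sum_{w\in V(G)}|N_D^-(w)|$, and by the in-degree restriction $F$ carries an acyclic orientation with at most one vertex of in-degree $2$, so that $|E(F)|=|V(G)|-s+\delta$, where $s$ is the number of sources and $\delta\in\{0,1\}$ records whether the triangle is realized internally (by a $V(G)$-vertex of in-degree $2$). Every edge of $G$ missing from $F$ is a competition edge; a non-triangle competition edge forces a distinct prey in $S$, because a $V(G)$-prey and a $3$-in-neighbor prey can only produce triangle edges. Counting these preys, together with the preys needed for any triangle edge not realized internally, gives after substitution $p(G)=|S|\ge |E(G)|-|V(G)|+s-2$. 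Since $s\ge 1$ always, this already yields $p(G)\ge|E(G)|-|V(G)|-1$; the $\delta=0$ alternatives (realizing the triangle through $S$) only increase the right-hand side, so they may be discarded. This step is the exact analogue of the triangle-free count behind Theorem~\ref{thm:no triangle}, now with one vertex of in-degree $2$ permitted.

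Next I would pin down $s$ in terms of $G^-$. Because $F$ uses only edges of $G^-$ together with the two triangle edges oriented into the in-degree-$2$ vertex, its cross-component structure is governed by $G^-$. The crux is the claim that the minimum possible value of $s$ over all admissible $D$ is $1$ when $G^-$ has one or two components and is $2$ when $G^-$ has three components. For the direction giving the upper bound I would construct $D$ explicitly: starting from a spanning subgraph of $G^-\cup\{xz,yz\}$ that is a single tree (two components) or carries one extra cycle (one component), I orient it with a single source, realize the triangle internally, and attach one new prey vertex to each remaining edge, producing a phylogeny digraph with exactly $|E(G)|-|V(G)|-1$ added vertices. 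When $G^-$ has three components the two reinstated triangle edges are forced bridges, so every admissible $F$ leaves the two ``non-prey'' triangle components each contributing a source; hence $s\ge 2$, and the construction instead needs $|E(G)|-|V(G)|$ vertices. Combining these constructions with the lower bound $p(G)\ge|E(G)|-|V(G)|+s-2$ yields the theorem.

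I expect the main obstacle to be precisely this last translation between the combinatorics of sources and cyclomatic number of the internal arc-graph $F$ and the number of components of $G^-$: one must rule out every orientation that could merge the three $G^-$-components while keeping a single source, and one must also verify that realizing the triangle internally (the $\delta=1$ regime) is never beaten by realizing it through an external prey, in all three component cases. Handling the distributions of $\{x,y,z\}$ among the components of $G^-$, and checking that the competition edges produced by the construction introduce no triangle of $G$ beyond the prescribed one, are the places where the bookkeeping is most delicate.
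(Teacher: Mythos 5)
This theorem is quoted by the paper from Roberts and Sheng \cite{roberts1998phylogeny}; the paper contains no proof of it, so your proposal can only be judged on its own merits. Your overall strategy (bound in-degrees of an optimal phylogeny digraph using the fact that every in-neighborhood is a clique of $G$, count dedicated prey vertices for non-triangle competition edges, and match with an explicit construction) is sound and is essentially the Roberts--Sheng style of argument; the general lower bound $p(G)\ge |E(G)|-|V(G)|-1$ and the upper-bound constructions for the one- and two-component cases can be made to work along the lines you describe.

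There is, however, a genuine gap in your treatment of the three-component case. You claim that when $G^-$ has three components every admissible internal arc-graph $F$ has at least $s\ge 2$ sources, because ``the two reinstated triangle edges are forced bridges.'' This silently assumes that $F$ contains only the two triangle edges oriented into the in-degree-$2$ vertex. But nothing forbids the third triangle edge from also being an arc: with $N_D^-(z)=\{x,y\}$ and additionally $(x,y)\in A(D)$, the vertex $y$ has in-neighborhood $\{x\}$ (a clique, so no lemma is violated), all three triangle edges lie in $F$, and $F$ can be a connected spanning subgraph consisting of a spanning tree of each component of $G^-$ plus the three triangle edges, oriented acyclically with a \emph{single} source. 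So $s=1$ is achievable and your inequality $p(G)\ge |E(G)|-|V(G)|+s-2$ then only yields $|E(G)|-|V(G)|-1$, one short of the claimed value. The correct accounting must track, in addition to $s$, the number $\tau$ of triangle edges realized as arcs: the refined bound is $p(G)\ge |E(G)|-|V(G)|+s+\tau-4$ (when the triangle is realized internally, so $\tau\ge 2$), and for three components one must show that $\tau=3$ forces nothing on $s$ but already gives the extra $+1$, while $\tau=2$ forces $s\ge 2$ because the unique cycle of a connected $F$ with one in-degree-$2$ vertex would have to pass through that vertex, which is impossible when its two in-arcs are the triangle edges and $G^-$ has three components. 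A secondary, smaller issue: in your upper-bound construction the choice of \emph{which} triangle vertex receives in-degree $2$ matters (e.g.\ if $G^-$ is a path with the three triangle vertices interleaved, the ``middle'' vertex cannot be separated from both others by deleting one tree edge), so the construction must be allowed to select the internally realized vertex, or else fall back on a single external prey with in-neighborhood $\{x,y,z\}$.
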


Given a graph $G$ and a vertex $w$ in $G$, we will denote by $G_w$ the component of $G^-$ that contains vertex $w$.

\begin{Thm}[\cite{roberts2000phylogeny}]~\label{thm:one diamond}
Let $G$ be a connected graph with exactly two triangles which share one of their edges.
Let $x$, $u$, $v$, $y$ be the vertices for these two triangles with the edge $uv$ being their common edge.
Then
\[
p(G)=\begin{cases}
          |E(G)|-|V(G)|-1 & \mbox{if $G^-$ has four components or} \\
          & \mbox{if $G^-$ has three components with $G_x=G_y$}; \\
          |E(G)|-|V(G)|-2 & \mbox{otherwise}.
        \end{cases}
\]
\end{Thm}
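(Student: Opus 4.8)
The plan is to prove matching upper and lower bounds, organised around one structural quantity: whether two \emph{adjacent} vertices of the diamond $\{x,u,v,y\}$ lie in a common component of $G^-$. First I would record the facts that drive everything. Since the five deleted edges are exactly the edges of the two triangles, $G^-$ is triangle-free; moreover $G$ contains no $K_4$ (a fourth triangle would otherwise arise), so every clique of $G$ has size at most three and the only triangles are $xuv$ and $yuv$. Consequently, in any phylogeny digraph $D$ for $G$---whose vertices outside $V(G)$ may, as in the introduction, be taken to be sinks---each in-neighbourhood $N^-_D(w)$ is a clique, so $|N^-_D(w)|\le 2$ for $w\in V(G)$ and $|N^-_D(w)|\le 3$ otherwise, with size three only on a triangle. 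Finally, every deleted edge has both endpoints in $\{x,u,v,y\}$, so each component of $G^-$ contains at least one of $x,u,v,y$; hence the number of components of $G^-$ equals $|\{G_x,G_u,G_v,G_y\}|\in\{1,2,3,4\}$ and, by Theorem~\ref{thm:no triangle} together with additivity of the phylogeny number over components, $p(G^-)=|E(G)|-|V(G)|-5+|\{G_x,G_u,G_v,G_y\}|$. A short check then shows that the two alternatives singled out in the statement (four components, or three components with $G_x=G_y$) are exactly the configurations in which no two \emph{adjacent} diamond vertices share a component of $G^-$.

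For the upper bounds I would argue by explicit construction. Choose a spanning tree of $G$ containing three diamond edges, say the path $x$--$u$--$v$--$y$, and orient it outward from a root. Generically each of the $|E(G)|-|V(G)|+1$ non-tree edges costs one extra vertex; but the two non-tree diamond edges (here $xv$ and $uy$) need none, because the diamond is realisable on its own four vertices using diamond vertices as in-degree-two apices---for instance the arcs $x\to v,\, u\to v,\, u\to y,\, v\to y$ realise all five diamond edges while keeping $x$ and $y$ non-adjacent. This saves two extra vertices and gives $p(G)\le |E(G)|-|V(G)|-1$ in every case. If in addition some adjacent pair $a,b$ of diamond vertices shares a component of $G^-$, I would reroute the spanning tree through an $a$--$b$ path in that component, so that the diamond edge $ab$ becomes a further non-tree edge realised internally (via one of the two remaining diamond vertices as apex); this saves a third extra vertex and yields $p(G)\le |E(G)|-|V(G)|-2$ in the ``otherwise'' cases. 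Throughout one must orient the $G^-$-edges at $x,u,v,y$ so that every apex retains a clique in-neighbourhood and no spurious edge (in particular no $xy$) is created; this is precisely where the placement of $x,u,v,y$ among the components enters.

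For the lower bounds I would use an edge count. Fix any phylogeny digraph $D$ with sink set $R$ and split $E(G)$ into the edges realised by an arc---forming a spanning subgraph $H$---and the edges realised purely by a common out-neighbour. The clique bounds control both parts: an undirected cycle of $H$, being acyclically oriented, has a sink whose two cycle-neighbours compete and so must be adjacent, forcing a triangle of $G$; likewise a competition edge with apex in $V(G)$ is a triangle edge, whereas a competition edge whose apex lies in $R$ consumes an extra vertex unless bundled with others on a single triangle. Because $G$ has only the triangles $xuv$ and $yuv$, each triangle can be cashed in as a ``saving'' in at most one of these ways; carrying the bookkeeping through shows the cyclomatic number $|E(G)|-|V(G)|+1$ exceeds $|R|$ by at most three, i.e.\ $p(G)\ge |E(G)|-|V(G)|-2$ in general. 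For the sharper bound $p(G)\ge|E(G)|-|V(G)|-1$ in the special cases, I would argue that the third saving can only arise from an absorbable cycle formed by a diamond edge together with a $G^-$-path joining its two (necessarily adjacent) endpoints; when no adjacent diamond pair is co-component this cycle does not exist, and the savings cap at two.

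The hard part, I expect, is exactly this lower-bound refinement. The two triangles share $uv$ and admit several inequivalent apex assignments, and one must exclude the possibility that some ingenious acyclic orientation smuggles in a third saving in the four-component or $G_x=G_y$ configurations---for example by using a single diamond vertex at once as a triangle apex and as the sink of a long cycle, or by exploiting the shared component of $x$ and $y$ without accidentally making them adjacent. Making the ``at most one saving per triangle'' accounting rigorous therefore means tracking the two apex choices together with all forced competition edges, verifying that $xy$ is never created, and confirming that a third independent cycle can be absorbed only through an adjacent co-component pair. Once that is pinned down, the four cases close at once against the constructions of the second paragraph.
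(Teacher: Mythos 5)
The paper does not prove this statement; it imports it from Roberts and Sheng~\cite{roberts2000phylogeny}, so there is no in-paper proof to measure you against. Judged on its own, your proposal has the right skeleton --- the general bounds do sandwich $p(G)$ between $|E(G)|-|V(G)|-2$ and $|E(G)|-|V(G)|-1$, and your reformulation of the two exceptional cases as ``no two \emph{adjacent} diamond vertices lie in a common component of $G^-$'' is correct --- but two of its four quarters are not proofs. First, the improved upper bound $|E(G)|-|V(G)|-2$ in the ``otherwise'' cases is only verified for one coincidence pattern: your fixed arc set $x\to v$, $u\to v$, $u\to y$, $v\to y$ forces $v$ and $y$ to be the in-degree-two apices, which is incompatible with, say, the case $G_u=G_v$ with $G_x$ and $G_y$ in separate components, where the spanning tree must reach $x$ and $y$ through diamond edges and the natural realization instead uses $x$ and $y$ as apices (arcs $u\to x$, $v\to x$, $u\to y$, $v\to y$) with the tree rooted inside $G_u$. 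Each coincidence pattern needs its own rooting and orientation so that no apex acquires a tree parent and no spurious competition edge appears; you defer exactly this step (``this is precisely where the placement of $x,u,v,y$ among the components enters'') rather than carrying it out.

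Second, and more seriously, the refined lower bound $p(G)\ge|E(G)|-|V(G)|-1$ in the two special cases is asserted but not proved; you say yourself that making the ``at most one saving per triangle'' accounting rigorous is the hard part. This is the crux of the theorem. The general $-2$ bound is immediate from the edge clique cover inequality of Lemma~\ref{lem:opsut}, since $\theta_e(G)=|E(G)|-3$ here, and the four-component case of the $-1$ bound follows from the paper's own machinery (attach pendants and apply Corollaries~\ref{cor:lower bound} and~\ref{cor:pendant} to compare $p(G)$ with $p(G^-)=|E(G)|-|V(G)|-1$); but in the three-component case with $G_x=G_y$ one only gets $p(G^-)=|E(G)|-|V(G)|-2$, so that route fails and a genuine structural argument is needed to show that the extra cycle formed by the $x$--$y$ path in $G^-$ together with two diamond edges cannot be absorbed for free. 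That argument is exactly what is missing from your proposal.
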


\begin{Thm}[\cite{roberts2000phylogeny}]~\label{thm:two edge disjoin triangles}
Let $G$ be a connected graph with exactly two triangles that are edge-disjoint.
Then
\[
p(G)=\begin{cases}
          |E(G)|-|V(G)|-1 & \mbox{if $G^-$ has five components};  \\
          |E(G)|-|V(G)|-2 & \mbox{if $G^-$ has four components};  \\
          |E(G)|-|V(G)|-2 & \mbox{if $G^-$ has three components,}  \\
          &\mbox{with each component containing exactly two triangle} \\
          &\mbox{vertices, or with one component containing a triangle of $G$}; \\
          |E(G)|-|V(G)|-3 & \mbox{otherwise}.
        \end{cases}
\]
\end{Thm}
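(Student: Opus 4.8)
The plan is to establish, in each of the four cases, a matching lower bound and an explicit construction, the whole argument being organized around the correspondence between phylogeny digraphs and edge clique covers. Fix an optimal phylogeny digraph $D$ for $G$ and, using the normalization from the introduction, assume every vertex of $S:=V(D)\setminus V(G)$ is a sink; a sink with at most one in-neighbor covers no edge of $G$, so I may also assume $|N^-_D(s)|\ge 2$ for each $s\in S$, whence $p(G)=|S|$. Since $G$ has exactly two edge-disjoint triangles, it has clique number $3$, only those two triangles, and no $K_4$; therefore $|N^-_D(v)|\le 2$ for $v\in V(G)$, a vertex of in-degree $2$ being the apex of a triangle, and $|N^-_D(s)|\le 3$. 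The closed in-neighborhoods $N^-_D(w)\cup\{w\}$ cover $E(P(D))$, so $\{N^-_D(v)\cup\{v\}:v\in V(G)\}\cup\{N^-_D(s):s\in S\}$ is an edge clique cover of $G$; I will use the value $\theta_E(G)=|E(G)|-4$ of the edge clique cover number, valid because each triangle must be covered by one clique while every remaining edge lies in no triangle.

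For the lower bound, let $n_0$ be the number of sources of $D$; all lie in $V(G)$, as the vertices of $S$ are sinks. A source yields only a trivial clique, so the cover above has exactly $(|V(G)|-n_0)+|S|$ nontrivial cliques, and therefore
\[
p(G)=|S|\ \ge\ \theta_E(G)-|V(G)|+n_0\ =\ |E(G)|-|V(G)|-4+n_0 .
\]
Since $D$ is acyclic, $n_0\ge 1$, which already gives $p(G)\ge|E(G)|-|V(G)|-3$ and settles the ``otherwise'' case from below. For the remaining cases one analyzes how small $n_0$ can be. The key structural point is that a non-triangle edge can never be realized through a common out-neighbor in $V(G)$ (that would create a triangle), so every vertex has at most one non-triangle in-arc and the non-triangle arcs form a forest inside $G^-$; this forest has at least $c$ trees, where $c$ is the number of components of $G^-$. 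A source can lose its status only by receiving triangle in-arcs, and one checks that, with an optimal cover, each of the two triangles eliminates at most one source — precisely its apex, which must be a forest root not dominating the other two triangle vertices. Hence $n_0\ge c-2$, and combined with the connectivity of $G$ (which forces the components of $G^-$ to be joined only through triangle edges, so that $c\le 5$) this yields the bounds for five and four components.

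For the upper bound I construct $D$ directly. In each component of $G^-$ orient a spanning tree away from a root, so every non-root vertex has its parent as unique in-neighbor, all induced in-neighborhood cliques are trivial, and no spurious edge appears; this leaves $c$ roots. Each non-tree edge of $G^-$ lies inside one component and cannot be made an arc without creating a directed cycle or a new triangle, so it is covered by one fresh vertex of $S$. Each triangle $\{x,y,z\}$ is then handled either by apexing it at a vertex $z$ chosen as a root whose descendants avoid $x$ and $y$ (the arcs $x\to z,\ y\to z$ force $xy$ through $z$ and delete one source), or, when no such placement exists, by a single vertex of $S$ with in-neighborhood $\{x,y,z\}$. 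Choosing the roots and apexes to eliminate as many sources as possible produces a phylogeny digraph with exactly $|E(G)|-|V(G)|-4+n_0^{\ast}$ extra vertices, where $n_0^{\ast}$ equals $3,2,2,1$ in the four cases.

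The crux is the three-component dichotomy: since two configurations with the same $c$ give different answers, the bound $n_0\ge c-2$ must be sharpened to $n_0\ge 2$ in exactly the listed special cases. Concretely, one must show that a second triangle can be apexed acyclically if and only if some component contributes a single triangle vertex able to serve as a common ``bottom'' for both apex operations, and that this fails precisely when each component holds exactly two triangle vertices or one component contains a whole triangle of $G$; in those configurations any attempt to apex both triangles closes a directed cycle through the edges joining the two triangles, forcing an extra source. Isolating this cyclic obstruction — equivalently, bounding the number of cliques of an optimal cover that can be given distinct acyclic $V(G)$-tops — is the main technical step; the five-, four-, and low-component cases then follow because there the optimal balance between the source count and the cover size is forced.
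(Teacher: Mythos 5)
This theorem is quoted in the paper from Roberts and Sheng \cite{roberts2000phylogeny}; the paper itself gives no proof, so your attempt has to be judged on its own. Its architecture (an Opsut-type lower bound $p(G)\ge \theta_e(G)-|V(G)|+n_0$ refined by counting sources, plus explicit digraph constructions) is the right one, but as written it has genuine gaps at exactly the points where the theorem's case distinctions come from.

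First, the claim that ``each of the two triangles eliminates at most one source --- precisely its apex'' is false as stated: if a triangle $\{x,y,z\}$ is realized entirely by arcs, say $x\to y$, $x\to z$, $y\to z$, then both $y$ and $z$ receive triangle in-arcs and both cease to be sources. What saves the bound is that this configuration contributes \emph{two} nontrivial cliques ($\{x,y\}$ at $y$ and $\{x,y,z\}$ at $z$) to the cover derived from $D$, one of them redundant, so the loss of an extra source is paid for by an extra clique. The correct statement is a joint inequality of the form $(|V(G)|-n_0)+|S|\ \ge\ (|E(G)|-6)+c_1+c_2$ together with $n_0\ \ge\ c-r_1-r_2$ and $c_i\ge\max(1,r_i)$, where $c_i$ counts cliques covering edges of the $i$-th triangle and $r_i$ counts its vertices receiving triangle in-arcs; this yields $|S|\ge |E(G)|-|V(G)|-6+c$ and hence the five- and four-component lower bounds. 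Your phrase ``with an optimal cover'' gestures at this trade-off but does not carry it out. Second, the three-component dichotomy --- the only place where two configurations with the same number of components of $G^-$ give different answers, and therefore the real content of the theorem --- is explicitly deferred (``the main technical step'') and never proved. Third, the upper-bound constructions are asserted rather than verified: apexing a triangle at a root $z$ requires the arcs $x\to z$, $y\to z$ not to close a directed cycle with the spanning-tree arcs oriented away from $z$, which is impossible whenever $x$ or $y$ lies in the tree rooted at $z$ (in particular no triangle can be apexed for free when $G^-$ is connected). Determining in which distributions of the six triangle vertices over the components both triangles, exactly one, or neither can be apexed acyclically is precisely the case analysis the theorem encodes, and it is missing. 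So the proposal is a plausible outline, not a proof.
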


As a matter of fact, Theorems~\ref{thm:no triangle}-\ref{thm:two edge disjoin triangles} can be integrated into the following proposition.
For a graph $G$ containing at most two triangle, \begin{equation}\label{eq:main}
|E(G)|-|V(G)|-2t(G)+d(G)+1 \le p(G) \le |E(G)|-|V(G)|-t(G)+1
\end{equation} where $t(G)$ and $d(G)$ denote the number of triangles and the number of diamonds in $G$, respectively.

In this paper, we extend the above inequalities to  graphs with many triangles (Theorem~\ref{thm:triangle and diamond}).
In the process of doing so, we derive Theorem~\ref{thm:share one vertex} which plays a key role in deducing various meaningful results including Theorem~\ref{thm:main} that answers a question given by Wu~{\it et al.}~\cite{Wu2019}. They showed that the difference between the phylogeny number and the competition number of a graph can be any integer greater than or equal to $-1$ and asked whether or not the same is true when limited to only connected graphs.

\section{Main results}

We will prove the inequalities given in \eqref{eq:main} for a connected $K_4$-free graph $G$ with mutually edge-disjoin diamonds.
We obtain interesting results on phylogeny numbers of graphs as byproducts.

We begin with the following lemma.

Given a digraph $D$ and two vertex sets $U$ and $V$ of $D$, we denote by $[U, V]_D$ the set of arcs in $D$ having a tail in $U$ and a head in $V$.

\begin{Lem}\label{lem:digraph}
Let $D$ be an acyclic digraph, $G$ be an induced subgraph of $P(D)$, and $H$ be a subgraph of $G$ satisfying the following:
\begin{itemize}
  \item[(i)] any maximal clique of $H$ is also a maximal clique in $G$;
  \item[(ii)] any maximal clique of $G$ belonging to $H$ and any maximal clique of $G$ not belonging to $H$ share at most one vertex.
\end{itemize}
In addition, we let $D^*$ be the digraph with the vertex set $$V(D^*) = V(H) \cup (V(D)\setminus V(G))$$ and the arc set
$$
A(D^*)=\bigcup_{x \in X} \left[N_D^-[x] \cap V(H),\{x\}\right]_D
$$
where
$$X=\{x \in V(H) \cup (V(D)\setminus V(G)) \mid  N_D^-[x] \cap V(H) \text{ is a clique of size at least two in $H$}\}.
$$
Then $P(D^*)$ contains $H$ as an induced subgraph.
\end{Lem}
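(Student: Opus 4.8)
The plan is to prove the statement by showing that the subgraph of $P(D^*)$ induced on $V(H)$ coincides with $H$; since $V(H) \subseteq V(D^*)$, this is exactly what ``$P(D^*)$ contains $H$ as an induced subgraph'' means. I would record two preliminary facts first. Because every arc of $D^*$ is an arc of $D$ and $V(D^*) \subseteq V(D)$, the digraph $D^*$ is a subdigraph of the acyclic digraph $D$, hence acyclic, so $P(D^*)$ is well defined. Moreover, for each vertex $x$ the set $N_D^-[x]$ is a clique of $P(D)$ (any two in-neighbors of $x$ share the out-neighbor $x$, and each in-neighbor is joined to $x$ by an arc), and the family $\{N_D^-[x] : x \in V(D)\}$ covers every edge of $P(D)$, hence every edge of the induced subgraph $G$. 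I would then establish the two inclusions between $E(H)$ and the set of edges of $P(D^*)$ joining two vertices of $V(H)$.

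For the inclusion ``no extra edges'', I would take an edge $pq$ of $P(D^*)$ with $p, q \in V(H)$ and trace back its witness. If $p$ and $q$ have a common out-neighbor $w$ in $D^*$, then by the definition of $A(D^*)$ the head $w$ must lie in $X$ and $p, q \in N_D^-[w] \cap V(H)$; if instead, say, $(p,q) \in A(D^*)$, then $q \in X$ and $p, q \in N_D^-[q] \cap V(H)$. In either case $p$ and $q$ lie in a common set $N_D^-[x] \cap V(H)$ with $x \in X$, and the defining property of $X$ makes this set a clique of $H$, so $pq \in E(H)$.

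The substantive inclusion is ``edges preserved''. Given $uv \in E(H)$, I would fix a maximal clique $Q$ of $H$ containing $u$ and $v$; by hypothesis (i), $Q$ is a maximal clique of $G$ that belongs to $H$. Since $uv$ is an edge of $G \subseteq P(D)$, there is a vertex $x$ with $u, v \in N_D^-[x]$, and $K := N_D^-[x] \cap V(G)$ is then a clique of $G$. Extending $K$ to a maximal clique $M$ of $G$, I observe $u, v \in Q \cap M$, so $|Q \cap M| \geq 2$. Here is where hypothesis (ii) does the decisive work: if $M$ did not belong to $H$, then $Q$ and $M$ would share at most one vertex, a contradiction; hence $M$ is a maximal clique of $H$, and in particular $M \subseteq V(H)$. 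Consequently $K \subseteq M \subseteq V(H)$, which simultaneously forces $x \in V(H) \cup (V(D) \setminus V(G))$ (for if $x \in V(G)$ then $x \in K \subseteq V(H)$) and identifies $N_D^-[x] \cap V(H)$ with $K$, a clique of $H$ of size at least two. Thus $x \in X$, and the arcs recorded in $A(D^*)$ realize $uv$: if $x \notin \{u,v\}$ then $(u,x), (v,x) \in A(D^*)$ give $u, v$ the common out-neighbor $x$, while if $x \in \{u,v\}$ the corresponding arc lies in $A(D^*)$ directly; either way $uv \in E(P(D^*))$.

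The main obstacle I anticipate is precisely the middle of the last paragraph: a priori the covering vertex $x$ could lie in $V(G) \setminus V(H)$ and therefore not even be a vertex of $D^*$, and $N_D^-[x] \cap V(H)$ need only be a clique of $G$, not of $H$. Both dangers are neutralized at once by the clique-intersection argument built on (i) and (ii) --- the two vertices $u, v$ shared by $Q$ and by any maximal clique $M \supseteq K$ pin $M$ inside $H$ via (ii), which pushes $K$ into $V(H)$. Once this is secured, the remaining verifications (acyclicity of $D^*$, the positional case analysis of $x$ relative to $\{u,v\}$, and the unwinding of $A(D^*)$) are routine bookkeeping.
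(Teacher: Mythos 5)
Your proof is correct and follows essentially the same route as the paper's: both directions (edges preserved, no extra edges) are argued exactly as in the paper, with condition (ii) applied to the two shared vertices $u,v$ to force the maximal clique containing the witness's in-neighborhood to belong to $H$. The only difference is cosmetic — you unify the paper's two cases (an arc between $u$ and $v$ versus a common out-neighbor) by working with the closed in-neighborhood $N_D^-[x]$ throughout, which streamlines the bookkeeping without changing the argument.
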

\begin{proof}
If $H$ is an empty graph, then the statement is trivially true.
Now suppose that $H$ has an edge.
Let $\CCC$ be the set of all maximal cliques of $H$.
We first show that $H$ is a subgraph of $P(D^*)$.
By definition, $V(H) \subset V(D^*)=V(P(D^*))$.
Take an edge $e:=uv$ in $H$.
Then $\{u, v\} \subset K$ for some $K \in \CCC$.
By the condition (i), $K$ is a maximal clique of $G$.
Moreover, one of the following is true:
either $(u, v) \in A(D)$ or $(v, u) \in A(D)$; $(u, w) \in A(D)$ and $(v, w) \in A(D)$ for some $w \in V(D)$.

{\it Case 1}. Either $(u, v) \in A(D)$ or $(v, u) \in A(D)$.
Without loss of generality, we may assume $(u, v) \in A(D)$.
Then $|N_D^-[v] \cap V(H)| \ge 2$.
Suppose that there is no clique in $\CCC$ including $N_D^-[v] \cap V(H)$.
Since $N_D^-[v] \cap V(H)$ is a clique of $G$, there is a maximal clique $L$ of $G$ containing $N_D^-[v] \cap V(H)$.
By the assumption, $L$ does not belong to $H$.
Then $\{u, v\} \subset K \cap L$, which  contradicts the condition (ii) given in the lemma statement.
Therefore there is a maximal clique in $\CCC$ containing $N_D^-[v] \cap V(H)$.
Thus $N_D^-[v] \cap V(H)$ is a clique in $H$ and so $v \in X$.
Hence, by the definition of $D^*$, $(u, v) \in A(D^*)$, which implies that $e$ is an edge of $P(D^*)$.

{\it Case 2}. $(u, w) \in A(D)$ and $(v, w) \in A(D)$ for some $w \in V(D)$.
Suppose that $w \notin V(H)$.
Then $\{u,v,w\}$ be a clique in $P(D)$ while $\{u,v,w\}$ is not a clique in $H$.
Thus, by the condition (ii), $w$ does not belong to $G$.
Hence $w \in V(H) \cup (V(D) \setminus V(G))$.
Since $N_D^-[w] \cap V(H)$ forms a clique in $G$, there is a maximal clique $Y$ in $G$ including $N_D^-[w] \cap V(H)$, so $\{u, v\} \subset Y$.
Since $\{u, v\} \subset Y \cap K$ and $K \in \CCC$, by the hypothesis (ii), $Y \in \CCC$.
If $w \in V(G)\setminus V(H)$, $\{u, v, w\}$ forms a clique in $G$ but not in $H$, which contradicts to the condition (ii) since $\{u, v\} \subset K$.
Therefore $w \in V(H) \cup (V(D)\setminus V(G))$ and so $w \in X$.
By the definition of $D^*$, $(u, w) \in A(D^*)$ and $(v, w) \in A(D^*)$.
Therefore $e$ is an edge of $P(D^*)$.
Thus we have shown that $H$ is a subgraph of $P(D^*)$.

To show that $H$ is an induced subgraph of $P(D^*)$, we take two vertices $u$ and $v$ in $H$ which are adjacent in $P(D^*)$.
Then either $(u, v) \in A(D^*)$ or $(v, u) \in  A(D^*)$, or there is a vertex $w \in V(D^*)$ such that $(u, w) \in A(D^*)$ and $(v,w) \in A(D^*)$.
We first assume that $(u, v) \in A(D^*)$.
Then $v \in X$ and $u \in N_D^-[v] \cap V(H)$.
By the definition of $X$, $N_D^-[v] \cap V(H)$ is a clique in $H$.
Since $v$ was taken from $H$, $\{u, v\} \subset N_D^-[v] \cap V(H)$ and so $u$ and $v$ are adjacent in $H$.
By a similar argument, we may show that if $(v, u) \in  A(D^*)$, then $u$ and $v$ are adjacent in $H$.
Finally we assume that there is a vertex $w \in V(D^*)$ such that $(u, w) \in A(D^*)$ and $(v,w) \in A(D^*)$.
Then $w \in X$, so $N_D^-[w] \cap V(H)$ is a clique in $H$.
Since $\{u, v\} \subset N_D^-[w] \cap V(H)$, $u$ and $v$ are adjacent in $H$.
Hence $H$ is an induced subgraph of $P(D^*)$.
\end{proof}

\begin{Thm}\label{thm:share one vertex}
Let $G$ be a graph and $G_1$, $G_2$, $\ldots$, $G_k$ be subgraphs of $G$ satisfying that
\begin{itemize}
  \item[(i)] $E(G_1)$, $E(G_2)$, $\ldots$, $E(G_k)$ are mutually disjoint;
  \item[(ii)] any maximal clique of $G_i$ is also a maximal clique in $G$ for each $i=1, \ldots, k$;
  \item[(iii)] any maximal clique of $G$ belonging to $G_i$ and any maximal clique of $G$ not belonging to $G_i$ share at most one vertex for each $i=1, \ldots, k$.
\end{itemize}
Then $p(G) \ge \sum_{i=1}^kp(G_i)$.
\end{Thm}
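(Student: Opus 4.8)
The plan is to start from an optimal phylogeny digraph $D$ for $G$ and, for each index $i$, to build from it a phylogeny digraph for $G_i$ whose vertices outside $G_i$ form a set $U_i \subseteq V(D) \setminus V(G)$; if I can arrange that $U_1, \dots, U_k$ are pairwise disjoint, then summing the resulting inequalities $p(G_i) \le |U_i|$ gives
\[
\sum_{i=1}^k p(G_i) \le \sum_{i=1}^k |U_i| = \Bigl|\bigcup_{i=1}^k U_i\Bigr| \le |V(D) \setminus V(G)| = p(G),
\]
which is the assertion.

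First I would produce the digraphs. Since $D$ is optimal, $G$ is an induced subgraph of $P(D)$, and hypotheses (ii) and (iii) of the theorem are precisely conditions (i) and (ii) of Lemma~\ref{lem:digraph} applied with $H = G_i$. Thus for each $i$ the lemma yields a digraph $D_i^*$ on $V(G_i) \cup (V(D)\setminus V(G))$ with $G_i$ an induced subgraph of $P(D_i^*)$. Because $A(D_i^*) \subseteq A(D)$, the digraph $D_i^*$ is acyclic, and because every arc of $D_i^*$ has its tail in $V(G_i)$ by construction, no arc runs from $V(D)\setminus V(G)$ into $V(G_i)$; hence $D_i^*$ is a phylogeny digraph for $G_i$. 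Only the vertices of the set $X$ in the lemma receive arcs, and since all tails lie in $V(G_i)$, each vertex of $V(D)\setminus V(G)$ outside $X$ is isolated in $D_i^*$. Deleting these isolated vertices leaves a phylogeny digraph for $G_i$ whose vertices outside $G_i$ are exactly
\[
U_i := \{\, x \in V(D)\setminus V(G) \mid N_D^-[x] \cap V(G_i) \text{ is a clique of size at least two in } G_i \,\},
\]
so $p(G_i) \le |U_i|$.

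The hard part will be the disjointness of the $U_i$, which is the only place where all three hypotheses interact. Suppose toward a contradiction that some $x$ lies in both $U_i$ and $U_j$ with $i \ne j$. Because the members of $N_D^-[x] \cap V(G)$ pairwise compete at $x$ and $G$ is induced in $P(D)$, the set $S := N_D^-[x] \cap V(G)$ is a clique of $G$ and therefore lies in a maximal clique $K$ of $G$; note that $S \cap V(G_i)$ and $S \cap V(G_j)$ are cliques of size at least two contained in $K$. If $K$ were a maximal clique belonging to $G_i$, then $K$ would be a clique of $G_i$ containing an edge of $S \cap V(G_j)$, and that edge would lie in both $E(G_i)$ and $E(G_j)$, contradicting (i); by symmetry $K$ belongs to neither $G_i$ nor $G_j$. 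On the other hand, $S \cap V(G_i)$ extends to a maximal clique $K_i$ of $G_i$, which by (ii) is a maximal clique of $G$ belonging to $G_i$ and sharing with $K$ the at-least-two vertices of $S \cap V(G_i)$. Since $K$ does not belong to $G_i$, condition (iii) forces $K = K_i$, making $K$ belong to $G_i$ --- a contradiction. Hence $U_i \cap U_j = \emptyset$, which completes the argument outlined above.
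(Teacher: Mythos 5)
Your proposal is correct and follows essentially the same route as the paper: restrict an optimal phylogeny digraph $D$ for $G$ to each $G_i$ via Lemma~\ref{lem:digraph}, discard the outside vertices with fewer than two in-neighbours in $V(G_i)$, and use conditions (i)--(iii) to show the surviving outside vertex sets are pairwise disjoint before summing. The only nitpick is the phrase ``condition (iii) forces $K=K_i$'': what (iii) actually forces is that $K$ belongs to $G_i$ (since $K$ and $K_i$ share at least two vertices and $K_i$ belongs to $G_i$), which is exactly the contradiction you then draw, so the argument stands.
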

\begin{proof}
By the definition of phylogeny number, there is an acyclic digraph $D$ such that $p(G) =|V(D) \setminus V(G)|$ and $P(D)$ contains $G$ as an induced subgraph.
For each $i=1, \ldots, k$, let $D_i$ be a digraph with the vertex set $$V(D_i) = V(G_i) \cup (V(D)\setminus V(G))$$ and the arc set
$$
A(D_i)=\bigcup_{v \in X_i} \left[N_D^-[v] \cap V(G_i),\{v\}\right]_D
$$
where
$$X_i=\{v \in V(G_i) \cup (V(D)\setminus V(G)) \mid  N_D^-[v] \cap V(G_i) \text{ is a clique of size at least two in $G_i$}\}.
$$
Then, by conditions (i) and (ii), we may apply Lemma~\ref{lem:digraph} to conclude that $P(D_i)$ contains $G_i$ as an induced subgraph.
Since $D_i$ is a subdigraph of $D$ which is acyclic, $D_i$ is acyclic for each $i=1$, $\ldots$, $k$.
Now, from $D_i$, we delete the vertices in $V(D_i) \setminus V(G_i)$ which have at most one in-neighbor in $V(G_i)$ and denote the resulting digraph by $D_i^*$ for each $i=1$, $\ldots$, $k$.
Then
\begin{equation}\label{eqn:D*}
|N^-_{D_i^*}(w) \cap V(G_i)| \ge 2
\end{equation}
for each vertex $w \in V(D_i^*) \setminus V(G_i)$ and each $i=1$, $\ldots$, $k$.
It is easy to check that $D_i^*$ is acyclic and $P(D_i^*)$ contains $G_i$ as an induced subgraph.
Thus $p(G_i) \le |V(D_i^*) \setminus V(G_i)|$ for each $i=1, \ldots, k$.

Now we show that $V(D_1^*) \setminus V(G_1)$, $\ldots$, $V(D_k^*) \setminus V(G_k)$ are mutually disjoint.
Suppose, to the contrary, that there are $i$ and $j$ with $1 \le i < j \le k$ such that $(V(D_i^*) \setminus V(G_i)) \cap (V(D_j^*) \setminus V(G_j)) \neq \emptyset$.
Then there is a vertex $x \in (V(D_i^*) \setminus V(G_i)) \cap (V(D_j^*) \setminus V(G_j))$.
By \eqref{eqn:D*},
\begin{equation} \label{eqn:ge2}
|N^-_{D_i^*}(x) \cap V(G_i)| \ge 2 \text{ and } |N^-_{D_j^*}(x) \cap V(G_j)| \ge 2.
\end{equation}
Since $D_i^*$ and $D_j^*$ are subdigraphs of $D$ and $G_i$ and $G_j$ are subgraphs of $G$, \begin{equation} \label{eqn:subset}
(N^-_{D_i^*}(x) \cap V(G_i)) \cup (N^-_{D_j^*}(x) \cap V(G_j)) \subset N_D^-(x) \cap V(G).
\end{equation}
Obviously, $N^-_{D_i^*}(x) \cap V(G_i)$ and $N^-_{D_j^*}(x) \cap V(G_j)$ form cliques in $G_i$ and $G_j$, respectively.
Then, since $E(G_i)$ and $E(G_j)$ are disjoint by the condition,  $N^-_{D_j^*}(x) \cap V(G_j)$ is not a clique of $G_i$. Thus $N_D^-(x) \cap V(G)$ is a clique in $G$ which is not contained in $G_i$ by \eqref{eqn:subset}.
In addition, there exist a maximal clique $K$ of $G_i$ containing $N^-_{D_i^*}(x) \cap V(G_i)$.
By the condition (ii), $K$ is a maximal clique of $G$.
By \eqref{eqn:subset},
\[(N_D^-(x) \cap V(G))\cap K \supset (N_D^-(x) \cap V(G)) \cap (N^-_{D_i^*}(x) \cap V(G_i)) = N^-_{D_i^*}(x) \cap V(G_i),\]
and, by \eqref{eqn:ge2}, we reach a contradiction to the condition (iii).
Thus $V(D_1^*) \setminus V(G_1)$, $\ldots$, $V(D_k^*) \setminus V(G_k)$ are mutually disjoint and so $$\sum_{i=1}^kp(G_i) \le \sum_{i=1}^k|V(D_i^*) \setminus V(G_i)| = |\bigcup_{i=1}^k(V(D_i^*) \setminus V(G_i))| \le |\bigcup_{i=1}^k(V(D_i) \setminus V(G_i))|.$$
We note that $\bigcup_{i=1}^{k}\left(V(D_i) \setminus V(G_i)\right) = V(D) \setminus V(G)$.
Hence $$\sum_{i=1}^kp(G_i) \le |V(D) \setminus V(G)|=p(G).$$
\end{proof}

\begin{Cor}\label{cor:lower bound}
Let $G$ be a graph and $H$ be a triangle-free subgraph of $G$ such that any maximal clique in $H$ is a maximal clique in $G$.
Then $p(G) \ge p(H)$.
\end{Cor}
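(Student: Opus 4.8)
The plan is to deduce the corollary directly from Theorem~\ref{thm:share one vertex} by specializing to $k=1$ with $G_1 = H$. With only one subgraph in play, condition (i) of that theorem (mutual edge-disjointness) is vacuous, and condition (ii) is exactly the hypothesis of the corollary, namely that every maximal clique of $H$ is a maximal clique of $G$. Thus the whole task collapses to checking condition (iii): that any maximal clique of $G$ belonging to $H$ and any maximal clique of $G$ not belonging to $H$ share at most one vertex. Once (iii) is verified, Theorem~\ref{thm:share one vertex} immediately gives $p(G) \ge p(G_1) = p(H)$.

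The crux of the argument is condition (iii), and this is precisely where the triangle-free hypothesis carries the load. Since $H$ contains no triangle, every clique of $H$ has at most two vertices, so each maximal clique of $H$ is either an isolated vertex or a single edge. I would take $K$ to be a maximal clique of $G$ belonging to $H$ and $L$ to be a maximal clique of $G$ not belonging to $H$, and argue by contradiction. If $|K \cap L| \ge 2$, then since $|K| \le 2$ we are forced to have $K = K \cap L \subseteq L$. Invoking maximality, $K$ is a maximal clique of $G$ while $L$ is a clique of $G$ containing $K$, so $L = K$.

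The contradiction is then bookkeeping about the phrase ``belonging to $H$'': a maximal clique of $G$ belongs to $H$ exactly when it is a maximal clique of $H$. Since $K$ belongs to $H$ and $L = K$, the clique $L$ would also be a maximal clique of $H$, contradicting the choice of $L$ as not belonging to $H$. Hence $|K \cap L| \le 1$, condition (iii) holds, and the corollary follows as an immediate specialization of Theorem~\ref{thm:share one vertex}. I do not anticipate any genuine obstacle here, only the need to be careful that the size bound $|K| \le 2$ coming from triangle-freeness is what rules out a two-vertex overlap; the rest is a short contradiction via maximality.
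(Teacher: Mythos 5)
Your proposal is correct and follows the paper's own route exactly: specialize Theorem~\ref{thm:share one vertex} to $k=1$ with $G_1=H$, note that (i) and (ii) are immediate, and use triangle-freeness of $H$ to bound maximal cliques of $H$ by two vertices so that condition (iii) holds. Your verification of (iii) is just a slightly more explicit version of the paper's one-line maximality argument.
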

\begin{proof}
It is obvious that $H$ satisfies the conditions (i) and (ii) in Theorem~\ref{thm:share one vertex}.
Since $H$ is triangle-free, any maximal clique of $H$ consists of a vertex or two adjacent vertices.
Furthermore, since any maximal clique of $H$ is a maximal clique of $G$, any maximal clique of $G$ not belonging to $H$ shares at most one vertex with a maximal clique of $H$.
Thus $p(G) \ge p(H)$ by Theorem~\ref{thm:share one vertex}.
\end{proof}

It is not easy to give a good lower bound for the phylogeny number of a graph. Corollary~\ref{cor:lower bound} is useful in a sense that there is a formula for computing the phylogeny number of a triangle-free graph (see Theorem~\ref{thm:no triangle} and Lemma~\ref{lem:union}). For an example, we take the graph $G$ given in Figure~\ref{fig:phy=1}. Then the induced cycle of length $4$ in $G$ satisfies the condition for being $H$ in Corollary~\ref{cor:lower bound}. Thus $p(G) \ge 1$ by Theorem~\ref{thm:no triangle} and Corollary~\ref{cor:lower bound}.
The acyclic digraph $D$ given in Figure~\ref{fig:phy=1} is a phylogeny digraph for $G$ satisfying $|V(D) \setminus V(G)|=1$.
Hence $p(G) \le 1$ and so $p(G)=1$.

\begin{figure}
\begin{center}
\begin{tikzpicture}[x=2.0cm, y=2.0cm]

   \vertex (b1) at (0,0) [label=left:$$]{};
   \vertex (b2) at (1,0) [label=right:$$]{};
   \vertex (b3) at (0,1) [label=left:$$]{};
   \vertex (b4) at (1,1) [label=left:$$]{};
   \vertex (b5) at (0,2) [label=left:$$]{};
   \vertex (b6) at (-0.86,1.5) [label=left:$$]{};
%

    \path
 (b1) edge [-,thick] (b2)
 (b1) edge [-,thick] (b3)
 (b2) edge [-,thick] (b4)
 (b3) edge [-,thick] (b4)
 (b3) edge [-,thick] (b5)
 (b3) edge [-,thick] (b6)
 (b5) edge [-,thick] (b6)
%
;
 \draw (0.5, -0.5) node{$G$};
\end{tikzpicture}
\qquad \qquad
\begin{tikzpicture}[x=2.0cm, y=2.0cm]

 \vertex (b1) at (0,0) [label=left:$$]{};
   \vertex (b2) at (1,0) [label=right:$$]{};
   \vertex (b3) at (0,1) [label=left:$$]{};
   \vertex (b4) at (1,1) [label=left:$$]{};
   \vertex (b5) at (0,2) [label=left:$$]{};
   \vertex (b6) at (-0.86,1.5) [label=left:$$]{};
   \vertex (b7) at (1,2) [label=left:$$]{};

    \path
 (b1) edge [->,thick] (b2)
 (b1) edge [->,thick] (b3)
 (b3) edge [->,thick] (b4)
 (b2) edge [->,bend right=25,thick] (b7)
 (b4) edge [->,thick] (b7)
 (b6) edge [->,thick] (b5)
 (b3) edge [->,thick] (b5)

;
 \draw (0.5, -0.5) node{$D$};
\end{tikzpicture}
\caption{A graph $G$ whose phylogeny number can be computed by Corollary~\ref{cor:lower bound}.}
\label{fig:phy=1}
\end{center}
\end{figure}
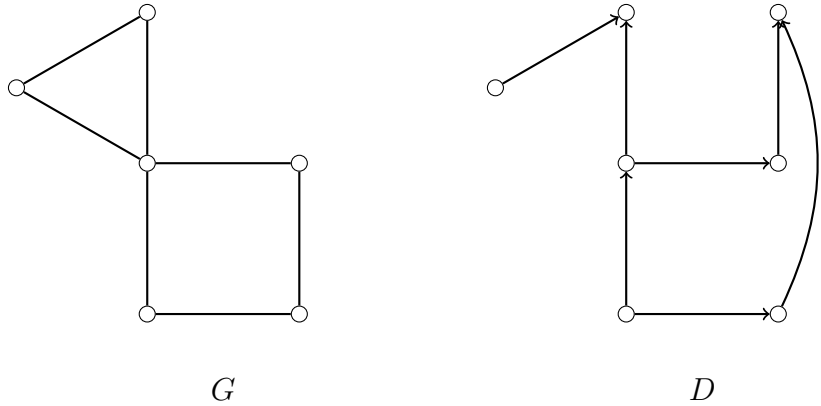

\begin{Lem}[\cite{roberts1998phylogeny}]~\label{lem:union}
Given a graph $G$, let $G_1$, $G_2$, $\ldots$, $G_m$ be the connected components of $G$ and let $D_i$ be an optimal phylogeny digraph for $G_i$ for each $i=1$, $2$, $\ldots$, $m$.
Then $D=D_1\cup D_2 \cup \cdots \cup D_m$ is an optimal phylogeny digraph for $G$ and $p(G)=p(G_1)+p(G_2)+ \cdots + p(G_m)$.
\end{Lem}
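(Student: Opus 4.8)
The plan is to establish the two inequalities $p(G) \le \sum_{i=1}^m p(G_i)$ and $p(G) \ge \sum_{i=1}^m p(G_i)$ separately, where the construction $D = D_1 \cup \cdots \cup D_m$ proves the first and simultaneously exhibits an optimal phylogeny digraph. Since each $D_i$ is a phylogeny digraph for $G_i$, I would first relabel the added vertices so that the sets $V(D_i) \setminus V(G_i)$ are pairwise disjoint; as the components $G_1, \ldots, G_m$ are already vertex-disjoint, the digraphs $D_1, \ldots, D_m$ then live on pairwise disjoint vertex sets and $D$ is genuinely their disjoint union.

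For the upper bound I would first check that $D$ is a legitimate phylogeny digraph for $G$. Acyclicity is immediate, since any directed cycle of $D$ would have no arcs crossing between the disjoint pieces and would therefore lie entirely within a single acyclic $D_i$. Likewise $D$ has no arcs from $V(D) \setminus V(G)$ into $V(G)$, because each $D_i$ has none from $V(D_i) \setminus V(G_i)$ into $V(G_i)$ and no arc crosses pieces. The crux is verifying that $G$ is an \emph{induced} subgraph of $P(D)$. Both edge types of a phylogeny graph are local: an underlying-graph edge comes from a single arc, and a competition edge $uv$ requires a common out-neighbor $w$ with $(u,w),(v,w) \in A(D)$; since arcs never cross pieces, $u$, $v$ (and $w$) must all lie in one $D_i$. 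Hence $P(D)$ is the disjoint union of the $P(D_i)$, and restricting to $V(G) = \bigcup_i V(G_i)$ gives $P(D)[V(G)] = \bigcup_i P(D_i)[V(G_i)] = \bigcup_i G_i = G$. Thus $D$ is a phylogeny digraph for $G$ with $|V(D) \setminus V(G)| = \sum_i |V(D_i) \setminus V(G_i)| = \sum_i p(G_i)$, giving $p(G) \le \sum_i p(G_i)$.

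For the lower bound I would apply Theorem~\ref{thm:share one vertex} with the components $G_1, \ldots, G_m$ themselves as the chosen subgraphs, after verifying its three hypotheses. Condition (i) holds because vertex-disjoint components are edge-disjoint. For (ii), a maximal clique of $G_i$ is a clique of $G$, and any vertex adjacent to all of it must lie in the same component, so maximality transfers to $G$. For (iii), a maximal clique belonging to $G_i$ lies in $V(G_i)$ while one not belonging to $G_i$ lies in another component, so the two share no vertex at all. Theorem~\ref{thm:share one vertex} then delivers $p(G) \ge \sum_{i=1}^m p(G_i)$, and combining this with the upper bound yields $p(G) = \sum_{i=1}^m p(G_i)$; since $D$ attains this value, it is optimal.

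I expect the only genuinely delicate point to be the induced-subgraph verification in the upper bound---in particular, ruling out a competition edge that might join vertices of two distinct components through a shared out-neighbor. The lower bound, by contrast, is essentially handed to us by the already-established Theorem~\ref{thm:share one vertex}, so most of the work lies in confirming that forming phylogeny graphs commutes with disjoint union.
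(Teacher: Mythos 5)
The paper offers no proof of Lemma~\ref{lem:union}: it is imported verbatim from Roberts and Sheng~\cite{roberts1998phylogeny}, so there is no in-paper argument to compare against. Your proof is correct on its own terms. The upper-bound half (checking that $D=D_1\cup\cdots\cup D_m$ is acyclic, has no arcs from added vertices into $V(G)$, and that phylogeny-graph formation commutes with disjoint union because both underlying edges and competition edges are confined to a single piece) is exactly the routine part, and you correctly identify the induced-subgraph check as the only point needing care; the key observation that a common out-neighbor forces all three vertices into one $D_i$ is stated and is what rules out spurious cross-component competition edges. For the lower bound, your appeal to Theorem~\ref{thm:share one vertex} with the components as the $G_i$ is valid: conditions (i)--(iii) hold trivially since maximal cliques of $G$ live inside single components, and there is no circularity because the proof of Theorem~\ref{thm:share one vertex} relies only on Lemma~\ref{lem:digraph}, not on Lemma~\ref{lem:union}. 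This route is somewhat anachronistic relative to the original source, which argues the lower bound directly: given an optimal phylogeny digraph for $G$, any added vertex with in-neighbors from $V(G)$ in two distinct components would create a forbidden cross-component edge, so the added vertices split cleanly among the components and restriction yields a phylogeny digraph for each $G_i$. Your version buys uniformity (one general theorem covers this and later applications) at the cost of invoking heavier machinery for what is classically a two-line observation.
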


\begin{figure}
\begin{center}
\begin{tikzpicture}[x=2.0cm, y=2.0cm]

    \vertex (b1) at (0,0) [label=below:$$]{};
    \vertex (b2) at (1,0) [label=below:$$]{};
    \vertex (b3) at (2,0) [label=below:$$]{};

    \vertex (b4) at (0,1) [label=above:$$]{};
    \vertex (b5) at (1,1) [label=above:$$]{};
    \vertex (b6) at (2,1) [label=above:$$]{};

    \path
 (b1) edge [-,thick] (b2)
 (b2) edge [-,thick] (b3)
 (b1) edge [-,thick] (b4)
 (b2) edge [-,thick] (b5)
 (b3) edge [-,thick] (b6)
 (b4) edge [-,thick] (b5)
 (b5) edge [-,thick] (b6)
;
 \draw (1,-0.5) node{$G$};
\end{tikzpicture}
\end{center}
\caption{A graph $G$ with $p(G)=2$.
Yet, $p(G_1)+p(G_2)<2$ for any two subgraphs $G_1$ and $G_2$ of $G$ satisfying the conditions (i), (ii), and (iii) in Theorem~\ref{thm:share one vertex}.}
\label{fig:not equal}
\end{figure}

The inequality given in Theorem~\ref{thm:share one vertex} may be strict if the number $k$ of subgraphs satisfying the condition (i), (ii), and (iii) is at least two.
By Theorem~\ref{thm:no triangle}, $p(G)=2$ for a graph $G$ given in Figure~\ref{fig:not equal}.
Yet, $p(G_1) + p(G_2) < 2$ for any two subgraphs $G_1$ and $G_2$ of $G$ satisfying the conditions (i), (ii), and (iii) in Theorem~\ref{thm:share one vertex}.
To show it by contradiction, suppose that $p(G_1) + p(G_2) = 2$ for some two subgraphs $G_1$ and $G_2$ of $G$ satisfying the conditions (i), (ii), and (iii) in Theorem~\ref{thm:share one vertex}.
Then one of the following is true: $p(G_1)=2$ and $p(G_2)=0$; $p(G_1)=1$ and $p(G_2)=1$; $p(G_1)=0$ and $p(G_2)=2$.
A proper subgraph $H$ of $G$ contains at most one cycle, and, by Theorem~\ref{thm:no triangle} and Lemma~\ref{lem:union}, $p(H)=1$ if $H$ contains a cycle and $p(H)=0$ otherwise.
Therefore, if $p(G_1)=2$ and $p(G_2)=0$, then $G_1=G$ and contradicts (i) or (ii) in Theorem~\ref{thm:share one vertex}.
Similarly, the third case cannot happen.
Now suppose that $p(G_1)=1$ and $p(G_2)=1$.
Then each of $G_1$ and $G_2$ contains a cycle by the above observation, which contradicts (i) of Theorem~\ref{thm:share one vertex}.

In this vein, it is interesting to find properties of a graph $G$ for which $p(G)=\sum_{i=1}^k p(G_k)$ for $k \ge 2$ and subgraphs $G_1$, $\ldots$, $G_k$ of $G$ satisfying the conditions (i), (ii), and (iii) in Theorem~\ref{thm:share one vertex}.
To do so, we need the following lemma.

A graph $G$ is {\it separable} by a vertex $w$ into two subgraphs $G_1$ and $G_2$ if $V(G_1)\cup V(G_2)=V(G)$, $E(G_1)\cup E(G_2)=E(G)$, and $V(G_1)\cap V(G_2)=\{w\}$.

\begin{Lem}[\cite{roberts2000phylogeny}]~\label{lem:one vertex indegree zero}
Let $G$ be a graph separable by a vertex $w$ into two graphs $G_1$ and $G_2$.
If at least one of $G_1$ and $G_2$ has an optimal phylogeny digraph with no incoming arcs towards $w$, then $p(G)=p(G_1)+p(G_2)$.
\end{Lem}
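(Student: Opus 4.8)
The plan is to prove the two inequalities $p(G) \le p(G_1)+p(G_2)$ and $p(G) \ge p(G_1)+p(G_2)$ separately. For the lower bound I would simply invoke Theorem~\ref{thm:share one vertex} with $k=2$ applied to the subgraphs $G_1$ and $G_2$. Condition (i) holds because $V(G_1)\cap V(G_2)=\{w\}$ forces $E(G_1)\cap E(G_2)=\emptyset$. For conditions (ii) and (iii) the key structural observation is that $w$ is a cut vertex: every vertex other than $w$ has all of its $G$-neighbours inside its own part, so a clique meeting $V(G_i)\setminus\{w\}$ cannot be enlarged using vertices of the other part. Hence each maximal clique of $G_i$ remains maximal in $G$ (condition (ii)), each maximal clique of $G$ lies entirely in $G_1$ or in $G_2$, and two maximal cliques coming from different parts can intersect only in $w$ (condition (iii)). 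The single degenerate situation, when $w$ is isolated in one part, turns the separation into a disjoint union and is disposed of directly by Lemma~\ref{lem:union}. This already yields $p(G)\ge p(G_1)+p(G_2)$, with no use of the indegree hypothesis.

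The upper bound is the heart of the proof and the only place the hypothesis enters. Let $D_1$ and $D_2$ be optimal phylogeny digraphs for $G_1$ and $G_2$, chosen with disjoint added-vertex sets $A_1=V(D_1)\setminus V(G_1)$ and $A_2=V(D_2)\setminus V(G_2)$ and, following the remark in the introduction, with every added vertex having outdegree zero; choose the labelling so that $D_1$ has no incoming arcs towards $w$. I would form $D$ by taking the union of $D_1$ and $D_2$ and identifying their two copies of $w$. Then $|V(D)\setminus V(G)|=|A_1|+|A_2|=p(G_1)+p(G_2)$, so it remains to check that $D$ is an acyclic phylogeny digraph for $G$.

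For \emph{acyclicity}: since $D_1$ and $D_2$ are each acyclic and share only $w$, any cycle of $D$ must pass through $w$, entering it along an incoming arc; this arc must lie in $A(D_2)$ because $D_1$ has none. Tracing the cycle forward from $w$, it must stay within a single part until it returns to $w$, producing either a cycle of $D_2$ or a walk in the $D_1$-part that can never re-enter $w$, both impossible. For the \emph{induced-subgraph property}: every edge of $G_i$ already appears in $P(D_i)\subseteq P(D)$, so $G\subseteq P(D)$; conversely a spurious edge between $u\in V(G_1)\setminus\{w\}$ and $v\in V(G_2)\setminus\{w\}$ could arise only from a common out-neighbour $x$, and since out-arcs of $u$ stay in $V(D_1)$ while out-arcs of $v$ stay in $V(D_2)$, one would need $x=w$, forcing an incoming arc $(u,w)\in A(D_1)$ --- contradicting the hypothesis. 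The same observation rules out new edges inside a single part, and because the added vertices have outdegree zero, $D$ has no arcs from outside $G$ into $G$. Thus $P(D)$ induces $G$, giving $p(G)\le p(G_1)+p(G_2)$ and, combined with the lower bound, the desired equality.

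The main obstacle I anticipate is the bookkeeping in the upper bound: pinning down exactly why the absence of incoming arcs at $w$ is precisely what is needed both to keep $D$ acyclic and to block a spurious edge across the cut created by a shared out-neighbour at $w$. Everything else is either a clean invocation of Theorem~\ref{thm:share one vertex} or the routine remark that $P(\cdot)$ only ever adds edges, so merging the two digraphs cannot destroy any edge of $G_1$ or $G_2$.
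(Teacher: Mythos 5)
This lemma is quoted from Roberts and Sheng~\cite{roberts2000phylogeny}; the paper itself gives no proof, so there is nothing internal to compare against. Judged on its own, your argument is correct and complete. The upper bound is the classical construction one would expect (and presumably close to the original source): glue optimal phylogeny digraphs $D_1$ and $D_2$ at $w$, with $V(D_1)\cap V(D_2)=\{w\}$ after relabelling the added vertices; you correctly isolate the two places the indegree hypothesis is genuinely needed, namely blocking a spurious $P(D)$-edge between $V(G_1)\setminus\{w\}$ and $V(G_2)\setminus\{w\}$ created by a common out-neighbour, which is forced to be $w$ itself and hence would require an arc into $w$ inside $D_1$. (Acyclicity actually holds for free for any two acyclic digraphs meeting in a single vertex, since a directed cycle can switch between $A(D_1)$ and $A(D_2)$ only at $w$ and would have to do so an even number of times; your use of the hypothesis there is harmless but unnecessary.) Your lower bound deviates from what the original source could have done: you invoke Theorem~\ref{thm:share one vertex} of the present paper, verifying that every maximal clique of $G$ lives entirely in one part and that cliques from different parts meet only in $w$. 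This is non-circular (Theorem~\ref{thm:share one vertex} rests only on Lemma~\ref{lem:digraph}) and it buys a shorter argument than building the restriction digraphs by hand, at the cost of correctly having to excise the degenerate case where $w$ is isolated in one part --- there condition (ii) fails, and your fallback to Lemma~\ref{lem:union} is the right fix. No gaps.
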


\begin{Thm}\label{thm:v-transitive}
Let $G$ be a graph and $G_1$, $G_2$, $\ldots$, $G_k$ be connected subgraphs of $G$ satisfying that
\begin{itemize}
  \item[(i)] $\{E(G_1), E(G_2), \ldots, E(G_k)\}$ is a partition of $E(G)$;
  \item[(ii)] every cycle of $G$ belongs to $G_i$ for some $i \in \{1, \ldots, k\}$;
  \item[(iii)] at least $k-1$ of $G_1$, $\ldots$, $G_k$ are vertex transitive.
\end{itemize}
Then $p(G) = \sum_{i=1}^kp(G_i)$.
\end{Thm}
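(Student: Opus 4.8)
The plan is to prove the equality by induction on $k$, using Lemma~\ref{lem:one vertex indegree zero} as the engine, after first understanding how the subgraphs $G_1,\ldots,G_k$ fit together. The base case $k=1$ is immediate, since then $G=G_1$. For the inductive step I want to peel off one $G_i$ along a single cut vertex, so the first task is to show that the $G_i$ interact only in a tree-like fashion. (I note in passing that the inequality $p(G)\ge\sum_{i=1}^k p(G_i)$ is already available from Theorem~\ref{thm:share one vertex}, whose hypotheses follow from (i) and (ii) here via the observation that every triangle, being a cycle, lies in a single $G_i$; but the induction below will deliver the full equality in one stroke.)

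The key structural step is to prove that any two of the subgraphs share at most one vertex. If $G_i$ and $G_j$ (with $i\neq j$) shared two vertices $u$ and $v$, then, each being connected, there is a $u$--$v$ path $P_i$ in $G_i$ and a $u$--$v$ path $P_j$ in $G_j$; since $E(G_i)\cap E(G_j)=\emptyset$ by (i), these paths are edge-disjoint, so their union contains a cycle using edges of both $G_i$ and $G_j$, contradicting (ii). The same cycle-based reasoning rules out block-level cycles: a sequence $G_{i_1},G_{i_2},\ldots,G_{i_t},G_{i_1}$ of distinct subgraphs consecutively sharing vertices would again produce a cycle of $G$ meeting several $G_i$. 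Hence, assuming $G$ connected (the general case reduces to this by Lemma~\ref{lem:union}), the subgraphs form a tree when viewed as super-nodes glued at their shared cut vertices.

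This block-tree has at least two leaves whenever $k\ge 2$, and by (iii) at most one $G_i$ fails to be vertex transitive, so I can choose a leaf $G_\ell$ that \emph{is} vertex transitive. A leaf meets the rest of $G$ in exactly one cut vertex $w$, so $G$ is separable by $w$ into $G_\ell$ and $G':=\bigcup_{i\neq\ell}G_i$. The crucial point is that a vertex-transitive graph has, for every vertex, an optimal phylogeny digraph with no incoming arcs towards that vertex: taking an optimal phylogeny digraph $D_\ell$ for $G_\ell$ all of whose added vertices are sinks, acyclicity forces a source lying in $V(G_\ell)$, and an automorphism of $G_\ell$ carrying that source to $w$ (extended by the identity on the added vertices) produces an optimal phylogeny digraph in which $w$ has indegree zero. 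Lemma~\ref{lem:one vertex indegree zero} then gives $p(G)=p(G_\ell)+p(G')$.

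It remains to check that $G'$ again satisfies (i)--(iii) with $k-1$ subgraphs: the edge sets still partition $E(G')$; every cycle of $G'$ is a cycle of $G$, hence lies in some $G_i$ with $i\neq\ell$; at least $k-2$ of the remaining $k-1$ subgraphs are vertex transitive; and $G'$ is connected because deleting a leaf from the block-tree preserves connectivity. The induction hypothesis yields $p(G')=\sum_{i\neq\ell}p(G_i)$, which together with $p(G)=p(G_\ell)+p(G')$ finishes the proof. I expect the main obstacle to be the vertex-transitivity step, namely verifying that the automorphism-relocated digraph is still a legitimate optimal phylogeny digraph with $w$ a source, together with the bookkeeping that guarantees a vertex-transitive leaf at every stage, so that the possibly non-transitive block survives until it becomes the final $k=1$ base case.
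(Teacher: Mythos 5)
Your proposal is correct and follows essentially the same route as the paper: induction on $k$, the cycle argument showing that two of the pieces can share at most one vertex, relocation of a source of an optimal phylogeny digraph onto the cut vertex via vertex transitivity, and an appeal to Lemma~\ref{lem:one vertex indegree zero}. The only organizational difference is that you peel a vertex-transitive leaf off the block tree and separate $G$ itself at the cut vertex, whereas the paper merges the (possibly) non-transitive piece $G_1$ with an adjacent transitive piece $G_2$, applies the lemma to their union $G^*=G_1\cup G_2$, and recurses on $\{G^*, G_3, \ldots, G_k\}$, thereby avoiding the need to establish the global tree structure and the existence of a transitive leaf.
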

\begin{proof}

We show $p(G) = \sum_{i=1}^kp(G_i)$ by complete induction on $k$.
If $k=1$, then $G=G_1$ and so the inequality trivially holds.
Suppose that $k \ge 2$ and the equality holds for any $l$ subgraphs of $G$ satisfying conditions (i), (ii), and (iii) for each $l \le k-1$.
Without loss of generality, we may assume that $G_1$ is not vertex transitive, if any.
Since $G$ is connected, $G_1$ must share a vertex with $G_i$ for some $i \in \{2, \ldots, k\}$ by the condition (i).
We may assume that $i=2$.

Suppose that $|V(G_1) \cap V(G_2)| \ge 2$.
Then we take two vertices $w_1, w_2 \in V(G_1) \cap V(G_2)$ the distance between which is the smallest in $G_1$.
Let $W_1$ and $W_2$ be a shortest $(w_1, w_2)$-path in $G_1$ and a shortest $(w_2, w_1)$-path in $G_2$, respectively.
Then the length of $W_1$ is the distance between $w_1$ and $w_2$ in $G_1$.
Suppose that $W_1$ and $W_2$ have a common vertex $w^*$ other than $w_1$ and $w_2$.
Then $w^* \in V(G_1) \cap V(G_2)$.
In addition, the $(w_1, w^*)$-section of $W_1$ is a path shorter than $W_1$ in $G_1$, so the distance between $w_1$ and $w^*$ is smaller than the distance between $w_1$ and $w_2$ in $G_1$, which contradicts the choice of $w_1$ and $w_2$.
Therefore $W_1$ and $W_2$ are internally vertex-disjoint and so $W_1W_2$ is a cycle in $G$.
Then, by the condition (ii), $G_r$ contains the cycle $W_1W_2$ for some $r \in [k]$.
By the condition (i), $W_1W_2$ belongs to neither $G_1$ nor $G_2$, so $r \neq 1$, $2$.
Yet, $G_1$ and $G_r$ share an edge, which contradicts the condition (i).
Therefore $|V(G_1) \cap V(G_2)|=1$.

Let $w \in V(G_1) \cap V(G_2)$. Then $G$ is separable by $w$ into two subgraphs $G_1$ and $G_2$.
Let $D_2$ be an optimal phylogeny digraph for $G_2$.
Since $D_2$ is acyclic, $D_2$ has a vertex $v$ of indegree zero.
If $v \notin V(G_2)$, $P(D_2 -v)$ contains $G_2$ as an induced subgraph, which contradicts the choice of $D_2$ to be optimal.
Thus $v \in V(G_2)$.
Since $G_2$ is vertex transitive, we may regard $v$ as $w$.
Thus, by Lemma~\ref{lem:one vertex indegree zero}, $p(G^*) = p(G_1) + p(G_2)$ where $G^*$ is the union of $G_1$ and $G_2$.
It is easy to check that the subgraphs $G^*$, $G_3$, $\ldots$, $G_k$ of $G$ satisfy the conditions (i), (ii), and (iii).
Hence, by the induction hypothesis,
$$p(G) = p(G^*)+\sum_{i=3}^k p(G_i)=\sum_{i=1}^k p(G_i)$$
and so $p(G) = \sum_{i=1}^k p(G_i)$.
\end{proof}

\begin{Cor}\label{cor:add complete}
Let $G$ be a graph and $K$ be a clique of $G$ that is a block in $G$ and contains exactly one cut-vertex of $G$.
Then $G$ and the graph $G_K$ obtained by deleting the vertices in $K$ except the cut-vertex have the same phylogeny number.
\end{Cor}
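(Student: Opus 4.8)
The plan is to realize $G$ as a graph separable by its cut-vertex and then invoke Lemma~\ref{lem:one vertex indegree zero}, after checking that the pendant clique $K$ contributes nothing to the phylogeny number.

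First I would let $w$ denote the unique cut-vertex of $G$ lying in $K$, and argue that $G$ is separable by $w$ into the complete subgraph on $V(K)$ (which I continue to call $K$) and the graph $G_K$. The key observation is that, since $K$ is a block whose only cut-vertex of $G$ is $w$, every vertex of $V(K) \setminus \{w\}$ has all of its neighbours inside $K$; consequently every edge meeting $V(K)\setminus\{w\}$ lies in $E(K)$, so that $E(G_K) = E(G) \setminus E(K)$ and $V(K) \cap V(G_K) = \{w\}$. Combined with $V(K) \cup V(G_K) = V(G)$ and $E(K) \cup E(G_K) = E(G)$, this is precisely the condition that $G$ be separable by $w$ into $K$ and $G_K$.

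Next I would show that $p(K) = 0$ and, more precisely, produce an optimal phylogeny digraph for $K$ with no incoming arcs towards $w$. Fixing any linear order of $V(K)$ with $w$ first and orienting each pair of vertices from the smaller to the larger one yields an acyclic transitive tournament $T$ on $V(K)$; since every pair of vertices of $K$ is joined by an arc of $T$, we have $P(T) = K$, so $T$ is a phylogeny digraph for $K$ with no added vertices and hence $p(K) = 0$. Because $w$ is the source of $T$, there are no arcs into $w$, so $T$ is an optimal phylogeny digraph for $K$ with no incoming arcs towards $w$.

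Finally, applying Lemma~\ref{lem:one vertex indegree zero} to the separation of $G$ into $K$ and $G_K$ (using that $K$ admits an optimal phylogeny digraph with no incoming arcs towards $w$) gives $p(G) = p(K) + p(G_K) = p(G_K)$, as desired. I do not expect a serious obstacle here; the only step requiring care is the bookkeeping in the second paragraph showing that $G$ is genuinely separable by $w$ — in particular that no edge is lost or double-counted between $K$ and $G_K$ — which rests entirely on $K$ being a block meeting the rest of $G$ only at the cut-vertex $w$.
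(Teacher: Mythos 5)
Your proof is correct, but it takes a different route from the paper's. The paper first splits $G$ into connected components (invoking Lemma~\ref{lem:union}), then applies Theorem~\ref{thm:v-transitive} to the pair $H_1$, $K$ inside the component containing $K$, using the vertex-transitivity of the complete graph $K$ to guarantee that an optimal phylogeny digraph for $K$ can be arranged to have no arcs into the cut-vertex; finally it uses $p(K)=0$ because complete graphs are chordal. You instead go straight to the underlying tool, Lemma~\ref{lem:one vertex indegree zero}: you verify the separability of $G$ by $w$ into $K$ and $G_K$ (your block-theoretic argument that every vertex of $V(K)\setminus\{w\}$ has all its neighbours in $K$ is the right justification), and you exhibit an explicit optimal phylogeny digraph for $K$ --- the transitive tournament with source $w$ --- which has no incoming arcs at $w$ and certifies $p(K)=0$ at the same time. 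This bypasses both the component decomposition (the separability definition as stated does not require connectivity) and the vertex-transitivity argument of Theorem~\ref{thm:v-transitive}, making your proof more self-contained and elementary; what the paper's route buys is brevity given that Theorem~\ref{thm:v-transitive} has already been established, and it illustrates that theorem's use. Since the proof of Theorem~\ref{thm:v-transitive} itself reduces to Lemma~\ref{lem:one vertex indegree zero}, the two arguments rest on the same foundation, but yours reaches it directly.
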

\begin{proof}
Let $G_1$, $\ldots$, $G_{\omega}$ be the components of $G$ for a positive integer $\omega$.
We may assume that $G_1$ contains $K$.
Let $H_1$ be the graph obtained from $G_1$ by deleting the vertices in $K$ except the cut-vertex.
Obviously, $H_1$ and $K$ satisfy the conditions (i), (ii), and (iii) of Theorem~\ref{thm:v-transitive} as connected subgraphs of $G_1$.
Thus, by the theorem, $p(G_1)=p(H_1)+p(K)$.
Since the phylogeny number of a complete graph is zero, $p(K)=0$ and so $p(G_1)=p(H_1)$.
Therefore $p(G) = p(G_1) + \cdots p(G_{\omega}) = p(H_1)+\cdots + p(G_{\omega})$ by Lemma~\ref{lem:union}.
We note that replacing $G_1$ with $H_1$ among the components of $G$ results in $G_K$.
Thus the right hand side of the second equality above equals $p(G_K)$ by Lemma~\ref{lem:union} and this completes the proof.
\end{proof}

\begin{Cor}\label{cor:pendant}
Let $G$ be a graph with a pendant vertex $v$.
Then $p(G) = p(G-v)$.
\end{Cor}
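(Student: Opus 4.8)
The plan is to realize the pendant edge as a one-vertex separation and invoke Lemma~\ref{lem:one vertex indegree zero}. Let $u$ be the unique neighbor of the pendant vertex $v$, set $G_1 := G-v$, and let $G_2$ be the subgraph on $\{u,v\}$ with the single edge $uv$, so that $G_2 \cong K_2$. Since the only edge of $G$ incident with $v$ is $uv$, we have $V(G_1)\cup V(G_2)=V(G)$, $E(G_1)\cup E(G_2)=E(G)$, and $V(G_1)\cap V(G_2)=\{u\}$; that is, $G$ is separable by $u$ into $G_1$ and $G_2$. I would stress that this uses no connectivity hypothesis on $G$ and remains valid in the degenerate case $G\cong K_2$, where $G_1$ is just the isolated vertex $u$.

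Next I would exhibit an optimal phylogeny digraph for $G_2=K_2$ that has no incoming arc towards $u$. Take the digraph $D_2$ on $\{u,v\}$ whose only arc is $(u,v)$. It is acyclic, has no vertex outside $G_2$, and $P(D_2)$ contains the edge $uv$, so $P(D_2)$ contains $G_2$ as an induced subgraph with $|V(D_2)\setminus V(G_2)|=0$. Since $K_2$ is connected and triangle-free, Theorem~\ref{thm:no triangle} gives $p(K_2)=|E(K_2)|-|V(K_2)|+1=0$, whence $D_2$ is optimal; by construction its single arc is directed away from $u$, so no arc enters $u$.

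With the separation by $u$ in place and $G_2$ furnishing an optimal phylogeny digraph with no incoming arcs towards $u$, Lemma~\ref{lem:one vertex indegree zero} then yields
\[
p(G)=p(G_1)+p(G_2)=p(G-v)+0=p(G-v),
\]
as claimed.

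The argument is short and I do not expect a genuine obstacle; the only point demanding care is the orientation of the pendant edge. One must direct it from $u$ to $v$ (rather than from $v$ to $u$) precisely so that the hypothesis ``no incoming arcs towards $w=u$'' of Lemma~\ref{lem:one vertex indegree zero} is satisfied. As an alternative, when $u$ is a cut-vertex one could instead apply Corollary~\ref{cor:add complete} to the bridge-block $K=\{u,v\}$ directly and treat $G\cong K_2$ as a separate trivial case; the route above is preferable because it avoids this case split and works uniformly.
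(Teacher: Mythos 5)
Your proof is correct and rests on the same mechanism the paper intends: the paper states this corollary without proof, as an immediate instance of Corollary~\ref{cor:add complete} applied to the block $K=\{u,v\}$, and that corollary is itself ultimately an application of Lemma~\ref{lem:one vertex indegree zero} to a one-vertex separation with a $K_2$ piece, which is exactly what you carry out directly. Your direct route has the minor advantage of handling the degenerate case $G\cong K_2$ (where $u$ is not a cut-vertex, so Corollary~\ref{cor:add complete} does not literally apply) and disconnected $G$ uniformly, without a separate appeal to Lemma~\ref{lem:union}.
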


Now we are ready to extend the inequalities given in \eqref{eq:main} to graphs with many triangles. To do so, we need the following lemmas.

For a clique $K$ and an edge $e$ of a graph $G$, we say that $K$ \emph{covers} $e$ (or $e$ \emph{is covered by} $K$) if and only if $K$ contains the two end points of $e$.
An \emph{edge clique cover} of a graph $G$ is a collection of cliques that cover all the edges of $G$. The \emph{edge clique cover number} of a graph $G$, denoted by $\theta_e(G)$, is the smallest number of cliques in an edge clique cover of $G$.

\begin{Lem}[\cite{roberts1998phylogeny}]~\label{lem:opsut}
For any graph $G$, $p(G) \ge \theta_e(G) - |V(G)| +1$.
\end{Lem}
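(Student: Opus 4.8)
The plan is to manufacture, from an optimal phylogeny digraph for $G$, an explicit edge clique cover of $G$ whose cardinality is governed by $|V(D)|$, and then simply read off the inequality. Concretely, I would first fix an acyclic digraph $D$ with $G$ an induced subgraph of $P(D)$ and $|V(D)\setminus V(G)| = p(G)$, so that $|V(D)| = |V(G)| + p(G)$.

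The key structural observation is that for every vertex $w \in V(D)$ the closed in-neighborhood $N_D^-[w]$ is a clique of $P(D)$: each in-neighbor of $w$ is joined to $w$ by an arc and is therefore adjacent to $w$ in $U(D)\subseteq P(D)$, while any two in-neighbors of $w$ share the common out-neighbor $w$ and so are adjacent in $C(D)\subseteq P(D)$. Since $G$ is an \emph{induced} subgraph of $P(D)$, each set $K_w := N_D^-[w]\cap V(G)$ is a clique of $G$. I would then check that $\{K_w : w \in V(D)\}$ covers every edge of $G$: an edge $uv$ of $G$ is an edge of $P(D)$, so either $(u,v)$ or $(v,u)$ is an arc of $D$ (whence $uv$ is covered by $K_v$ or $K_u$), or $u$ and $v$ have a common out-neighbor $w$ in $D$ (whence $uv$ is covered by $K_w$); in all cases both endpoints lie in $V(G)$, so the covering clique is a genuine $K_w$.

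The final step is a counting argument. Only vertices $w$ with $|K_w|\ge 2$ contribute cliques that actually cover an edge, so $\theta_e(G)$ is at most the number of such vertices, which is at most $|V(D)|$. Because $D$ is acyclic it possesses a vertex $v_0$ of indegree $0$; then $K_{v_0}\subseteq\{v_0\}$ covers no edge, so this vertex may be discarded and the number of edge-covering cliques is at most $|V(D)|-1$. Combining the estimates gives
\[
\theta_e(G) \le |V(D)| - 1 = |V(G)| + p(G) - 1,
\]
and rearranging yields $p(G) \ge \theta_e(G) - |V(G)| + 1$.

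The step I would treat most carefully is the passage from cliques of $P(D)$ to cliques of $G$ together with the claim that the $K_w$ still cover every edge after intersecting with $V(G)$: this is precisely where the induced-subgraph hypothesis is indispensable, since one must ensure that each edge of $G$ is witnessed by a clique whose relevant vertices all survive the intersection. The discarding of the indegree-zero vertex supplied by acyclicity is what produces the crucial $+1$, and the only thing to confirm there is the elementary fact that an acyclic digraph always has at least one source.
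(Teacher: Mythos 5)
Your argument is correct, and it is the standard proof of this bound: the paper itself states Lemma~\ref{lem:opsut} as a quoted result of Roberts and Sheng~\cite{roberts1998phylogeny} and gives no proof, and your construction (closed in-neighborhoods $N_D^-[w]\cap V(G)$ as an edge clique cover, with the induced-subgraph hypothesis guaranteeing these are cliques of $G$, and acyclicity supplying a source whose clique covers no edge and hence the $+1$) is exactly the argument in that reference. No gaps; the one point you flagged as delicate is indeed the only place the hypotheses are used, and you handle it correctly.
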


\begin{Lem}\label{lem:contradiction triangle}
Let $G$ be a graph and $xy$ be an edge of $G$ which is not an edge of any triangle in $G$.
If a phylogeny digraph $D$ for $G$ contains the arc $(x,y)$, then $x$ is the only in-neighbor of $y$  in $D$ which belongs to $V(G)$.
\end{Lem}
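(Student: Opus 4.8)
The plan is to argue by contradiction. I would assume that, beyond $x$, the vertex $y$ admits a second in-neighbor $z \in V(G)$ with $z \neq x$, so that both $(x,y)$ and $(z,y)$ are arcs of $D$. From this I would manufacture a triangle of $P(D)$ on $\{x,y,z\}$ and then exploit the fact that $G$ is an \emph{induced} subgraph of $P(D)$ to transport that triangle back into $G$, contradicting the hypothesis that $xy$ lies on no triangle of $G$.

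Concretely, I would first note that since $(x,y) \in A(D)$ and $(z,y) \in A(D)$, the vertices $x$ and $z$ have the common out-neighbor $y$ in $D$, whence $xz \in E(C(D)) \subseteq E(P(D))$. The same two arcs place the edges $xy$ and $zy$ into $E(U(D)) \subseteq E(P(D))$. Hence $\{x,y,z\}$ induces a triangle in $P(D)$. To be sure this is a genuine triangle, I would observe that $z \neq y$ (because $(z,y)$ is an arc of the loopless acyclic digraph $D$) and $x \neq y$ (because $xy \in E(G)$), which together with the standing assumption $z \neq x$ makes the three vertices pairwise distinct.

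Next I would invoke that all three of $x$, $y$, $z$ lie in $V(G)$: we have $x,y \in V(G)$ since $xy \in E(G)$, and $z \in V(G)$ by the choice of the hypothesized second in-neighbor. Because $G$ is an induced subgraph of $P(D)$, the three edges $xy$, $yz$, $xz$ of $P(D)$ must in fact be edges of $G$, so $\{x,y,z\}$ is a triangle of $G$ containing the edge $xy$. This contradicts the assumption that $xy$ is not an edge of any triangle in $G$, and the lemma follows.

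I do not expect any genuinely hard step here; the argument is a short contradiction once the triangle is identified. The only points demanding care are confirming that the supposed second in-neighbor $z$ is distinct from both $x$ and $y$ (the distinctness from $y$ resting on the acyclicity, hence loopless-ness, of $D$) and verifying that all three vertices belong to $V(G)$, since it is precisely the induced-subgraph property of $G$ in $P(D)$ that licenses converting the triangle of $P(D)$ into a triangle of $G$.
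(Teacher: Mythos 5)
Your argument is correct and is essentially the same as the paper's own proof: both suppose a second in-neighbor $z\in V(G)\setminus\{x\}$ of $y$, observe that $\{x,y,z\}$ then forms a triangle in $P(D)$, and use the induced-subgraph property to pull that triangle back into $G$, contradicting the hypothesis. Your version merely spells out the routine details (why $xz\in E(C(D))$, why the three vertices are distinct) that the paper leaves implicit.
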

\begin{proof}
Suppose, to the contrary, that $z \in V(G)\setminus\{x\}$ is an in-neighbor of $y$ in $D$.
Then $\{x, y, z\}$ forms a triangle in $P(D)$.
Since $G$ is an induced subgraph of $P(D)$ and $\{x, y, z\} \subset V(G)$, $\{x,y,z\}$ forms a triangle in $G$ and we reach a contradiction.
\end{proof}

\begin{Lem}\label{lem:contradiction triangle outside}
Let $G$ be a graph and $xy$ be an edge of $G$ which is not an edge of any triangle in $G$ and $D$ be a phylogeny digraph for $G$.
If $z$ is a common out-neighbor of $x$ and $y$ in $D$, then $z$ does not belong to  $G$ and $x$ and $y$ are the only in-neighbors of $z$ in $D$ that belong to $G$.
\end{Lem}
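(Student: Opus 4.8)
The plan is to prove both assertions by contradiction, exploiting exactly the mechanism already used in Lemma~\ref{lem:contradiction triangle}: any triangle forced to appear in $P(D)$ on three vertices all lying in $V(G)$ must already be a triangle of $G$, since $G$ is an induced subgraph of $P(D)$; and such a triangle through the edge $xy$ is forbidden by hypothesis.

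First I would establish that $z \notin V(G)$. Because $(x,z) \in A(D)$ and $(y,z) \in A(D)$, the pairs $xz$ and $yz$ are edges of the underlying graph $U(D)$, hence of $P(D)$. Moreover $z$ is a common out-neighbor of $x$ and $y$, so $xy$ is an edge of the competition graph $C(D)$, hence of $P(D)$. Thus $\{x,y,z\}$ spans a triangle in $P(D)$. If $z$ belonged to $V(G)$, then all three of $x,y,z$ would lie in $V(G)$, and since $G$ is an induced subgraph of $P(D)$ this triangle would be a triangle of $G$ through the edge $xy$, contradicting the hypothesis that $xy$ lies on no triangle of $G$. Hence $z \notin V(G)$.

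Next I would show that $x$ and $y$ are the only in-neighbors of $z$ belonging to $V(G)$. Suppose, for contradiction, that some $w \in V(G) \setminus \{x,y\}$ satisfies $(w,z) \in A(D)$. Then $z$ is a common out-neighbor of $w$ and $x$, and also of $w$ and $y$, so $wx$ and $wy$ are edges of $C(D)$ and hence of $P(D)$; together with the edge $xy$ of $G$, which is an edge of $P(D)$, the set $\{w,x,y\}$ spans a triangle of $P(D)$ all of whose vertices lie in $V(G)$. As before, the induced-subgraph property forces this to be a triangle of $G$ through the edge $xy$, a contradiction. Therefore no such $w$ exists.

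I do not anticipate a genuine obstacle, as the argument is a direct variant of Lemma~\ref{lem:contradiction triangle}. The only point requiring a little care is the bookkeeping about which edges of $P(D)$ are being invoked, recalling that $E(P(D)) = E(U(D)) \cup E(C(D))$: the arc-induced edges $xz$ and $yz$ come from $U(D)$, whereas the edges $xy$, $wx$, and $wy$ come from $C(D)$. One must also keep track of the fact that every vertex of the triangle produced lies in $V(G)$, so that the induced-subgraph property genuinely pushes the triangle back into $G$.
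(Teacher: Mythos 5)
Your proof is correct and follows essentially the same route as the paper's: both parts are handled by observing that the offending configuration would create a triangle of $P(D)$ on three vertices of $V(G)$ containing the edge $xy$, which the induced-subgraph property would force to be a triangle of $G$, contradicting the hypothesis. Your version merely spells out the bookkeeping of which edges come from $U(D)$ versus $C(D)$, which the paper leaves implicit.
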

\begin{proof}
Suppose that $z$ is a common out-neighbor of $x$ and $y$ in $D$.
If $z$ belongs to $G$, then $\{x,y,z\}$ forms a triangle in $G$ and we reach a contradiction.
Therefore $z$ does not belong to $G$.
If there is an in-neighbor $w$ of $z$ in $D$ which belongs to $V(G) \setminus \{x,y\}$, then $\{x,y,w\}$ forms a triangle in $G$ and we reach a contradiction.
\end{proof}

For an acyclic digraph $D$, an edge is called a \emph{cared edge} in $P(D)$ if the edge belongs to the competition graph $C(D)$ but not to the $U(D)$.
For a cared edge $xy \in P(D)$, there is a common out-neighbor $v$ of $x$ and $y$ and it is said that $xy$ \emph{is taken care of by} $v$ or that $v$ \emph{takes care of} $xy$.
A vertex in $D$ is called a \emph{caring vertex} if an edge of $P(D)$ is taken care of by the vertex~\cite{lee2017phylogeny}.

Given a digraph $D$ with $n$ vertices, a one-to-one correspondence $f:V(D) \to [n]$ is called an \emph{acyclic labeling} of $D$ if $f(u) > f(v)$ for any arc $(u, v)$ in $D$.
It is well-known that $D$ is acyclic if and only if there is an acyclic labeling of $D$.

\begin{Thm}\label{thm:triangle and diamond}
Let $G$ be a connected $K_4$-free graph with mutually edge-disjoint diamonds.
Then
\[
|E(G)|-|V(G)|-2t(G)+d(G)+1 \le p(G) \le |E(G)|-|V(G)|-t(G)+1
\]
where $t(G)$ and $d(G)$ denote the number of triangles and the number of diamonds in $G$, respectively.
Especially, the first inequality becomes equality if $G^-$ is connected and the second inequality becomes equality if $G^-$ has exactly $2t(G)-d(G)+1$ components.

%
%
%
\end{Thm}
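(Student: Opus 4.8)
The plan is to prove the two inequalities separately and then isolate the two equality cases, using throughout that a $K_4$-free graph with mutually edge-disjoint diamonds has every triangle lying in at most one diamond; hence the triangles split into $t(G)-2d(G)$ solitary triangles and $d(G)$ diamonds (two triangles each), the set of triangle edges has size exactly $3t(G)-d(G)$, and so $|E(G^{-})|=|E(G)|-3t(G)+d(G)$. I write $t=t(G)$, $d=d(G)$, and let $c$ be the number of components of $G^{-}$.

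For the lower bound I would bound the edge clique cover number and invoke Lemma~\ref{lem:opsut}. Since $G$ is $K_4$-free, every maximal clique is an edge or a triangle, so each clique of a cover is a single edge or a triangle. Each non-triangle edge lies in no triangle and must therefore be covered by itself, giving $|E(G)|-3t+d$ forced single-edge cliques. Moreover every triangle $T$ has at least two \emph{private} edges (all three for a solitary triangle, the two unshared ones for a diamond triangle), and a private edge can be covered only by $T$ or by itself; choosing one private edge per triangle yields $t$ edges whose covering cliques are pairwise distinct and distinct from the single-edge cliques above. Thus $\theta_e(G)\ge(|E(G)|-3t+d)+t=|E(G)|-2t+d$, and Lemma~\ref{lem:opsut} gives $p(G)\ge|E(G)|-|V(G)|-2t+d+1$.

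For the upper bound I would build a phylogeny digraph through an acyclic labeling. Orient a spanning tree of $G$ away from a root, so every vertex has in-degree at most one and no false competition edge appears; then handle each triangle block. For a solitary triangle I reorient one leg so that its two legs point into a chosen apex, realizing the third (base) edge by competition at that graph vertex. For a diamond $x,u,v,y$ (missing edge $xy$) I use the gadget $x\to v,\ u\to v,\ u\to y,\ v\to y$: the four arcs realize $xv,uv,uy,vy$ as underlying edges, competition at $v$ realizes the base $xu$, and $y$ receives the adjacent pair $\{u,v\}$ so no false edge arises. The arcs then form a spanning tree together with one extra arc per diamond, i.e.\ $|V(G)|-1+d$ underlying edges; competition at graph vertices realizes $t-d$ base edges; and each of the remaining $|E(G)|-(|V(G)|-1+d)-(t-d)=|E(G)|-|V(G)|-t+1$ edges is taken care of by its own fresh sink vertex, whence $p(G)\le|E(G)|-|V(G)|-t+1$. (An alternative, partial route is induction: Corollary~\ref{cor:pendant} and Corollary~\ref{cor:add complete} delete pendant vertices and solitary triangle blocks meeting $G$ in a single cut vertex, while Lemma~\ref{lem:one vertex indegree zero} peels off a diamond attached at one cut vertex—every vertex of a diamond is a source in some zero-extra phylogeny digraph of the diamond—and each reduction preserves both $|E(G)|-|V(G)|-t+1$ and $p(G)$.)

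For the equalities, suppose first that $G^{-}$ is connected. Take an optimal phylogeny digraph $D^{-}$ for $G^{-}$ with its extra vertices as sinks and, for each triangle, adjoin a new sink whose in-neighbours are the three triangle vertices; this adds $t$ sinks, stays acyclic, creates no false edge, and realizes every triangle edge, so $p(G)\le p(G^{-})+t=|E(G)|-|V(G)|-2t+d+1$, matching the lower bound. Suppose instead that $c=2t-d+1$. Let $H$ be the subgraph of $G^{-}$ spanned by its edges; $H$ is triangle-free with no isolated vertex, so its maximal cliques are exactly the non-triangle edges of $G$, which are maximal in $G$, and Corollary~\ref{cor:lower bound} applies. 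By Theorem~\ref{thm:no triangle} and Lemma~\ref{lem:union}, $p(G)\ge p(H)=|E(G^{-})|-|V(G)|+c=|E(G)|-|V(G)|-t+1$, matching the upper bound. The main obstacle is the general upper-bound construction: I must guarantee globally that every in-neighbourhood is a clique (no false edge) while keeping the arcs acyclic, and the delicate part is coordinating the diamonds—each needing its two-apex four-arc gadget—with the surrounding spanning tree, with the apex choices for triangles sharing vertices, and with one another when external $G^{-}$-paths force cycles through a block (the regime $c<2t-d+1$, where the clean block-peeling induction no longer suffices).
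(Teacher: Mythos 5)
Your lower bound is correct and is the paper's argument: $\theta_e(G)=|E(G)|-2t(G)+d(G)$ together with Lemma~\ref{lem:opsut} (your private-edge count is in fact a proof of the inequality $\theta_e(G)\ge |E(G)|-2t(G)+d(G)$, which the paper leaves as ``easy to check''). Both equality cases are also correct and essentially the paper's: for $G^-$ connected you add one caring sink per triangle to an optimal phylogeny digraph of $G^-$; for $c=2t(G)-d(G)+1$ you apply Corollary~\ref{cor:lower bound} to a triangle-free subgraph whose maximal cliques are non-triangle edges of $G$ (the paper routes this through the pendant-augmented graph $L$ and $L^-$, whereas you apply the corollary directly to the edge-spanned subgraph of $G^-$; your arithmetic survives because each isolated vertex of $G^-$ contributes $0$ to $|E|-|V|+c$).

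The genuine gap is the general upper bound, and you have correctly identified it yourself: the spanning-tree-plus-gadget construction is not carried out, and the difficulties you list (keeping every in-neighbourhood a clique when triangles share vertices, preserving acyclicity after reorienting tree arcs toward apexes, reconciling the four-arc diamond gadget with tree arcs already entering $v$ and $y$) are precisely the points where such a global construction breaks; for instance, an apex that also receives its tree arc acquires an in-neighbourhood that is not a clique, creating a false competition edge. The paper avoids all global coordination by inducting on $t(G)$: the base cases are Theorems~\ref{thm:no triangle}--\ref{thm:two edge disjoin triangles}; in the inductive step one deletes a single edge of a triangle (when $G$ is diamond-free) or the three edges at one tip $z$ of a diamond, takes an \emph{optimal} phylogeny digraph $D^*$ of the smaller graph, and reinserts the deleted edges by adding arcs (and, in the connected-$G^*$ subcase, exactly one new vertex, which is what degrades $-2t+d$ to $-t$). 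The reinsertion is safe only because the surviving edges $uw,vw$ (resp.\ $xy,xw$) lie on no triangle of the reduced graph, so Lemmas~\ref{lem:contradiction triangle} and~\ref{lem:contradiction triangle outside} pin down the $G$-in-neighbourhoods of the relevant heads and caring vertices and rule out false competition edges. This localization to one triangle or one diamond at a time, backed by those two lemmas, is the missing idea; without it (or some equivalent control), your direct construction cannot be certified to produce a phylogeny digraph.
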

\begin{proof}
It is easy to check that
\[
\theta_e(G)=|E(G)|-2(t(G)-2d(G))-3d(G)=|E(G)|-2t(G)+d(G).
\]
By Lemma~\ref{lem:opsut},  $|E(G)|-|V(G)|-2t(G)+d(G)+1 \le p(G)$.
Now we show $p(G) \le |E(G)|-|V(G)|-t(G)+1$ by induction on $t(G)$.
By Theorems~\ref{thm:no triangle}, \ref{thm:one triangle}, \ref{thm:one diamond}, and \ref{thm:two edge disjoin triangles}, the inequalities hold for graphs having at most two triangles.
Thus we may assume that $G$ contains at least three triangles.

{\it Case 1}. There is no diamond in $G$.
We take a triangle $uvwu$ in $G$.
Then $E(G-uv)=E(G)\setminus \{uv\}$,  $V(G-uv)=V(G)$ and $t(G-uv)=t(G)-1$.
In addition, it is easy to check that $G-uv$ is  connected, $K_4$-free, and diamond-free.
Therefore, by the induction hypothesis,
\begin{align}\label{eqn:p(G-uv)}
p(G-uv) &\le |E(G-uv)|-|V(G-uv)|-t(G-uv)+1  \notag\\
& =(|E(G)|-1)-|V(G)|-(t(G)-1)+1 \notag\\
&=|E(G)|-|V(G)|-t(G)+1.
\end{align}
Let $D^*$ be an optimal phylogeny digraph for $G-uv$.
Then, since $uw$ and $vw$ are edges of $G-uv$, one of the following is true:
$uw$ or $vw$ is a cared edge of $P(D^*)$;
none of $uw$ and $vw$ is a cared edge of $P(D^*)$.

{\it Subcase 1-1}. $uw$ or $vw$ is a cared edge of $P(D^*)$.
Then $u$ and $w$ or $v$ and $w$ have a common out-neighbor in $D^*$.
Without loss of generality, we may assume that $u$ and $w$ have a common out-neighbor $z$ in $D^*$.
Since $G$ is diamond-free and $K_4$-free, $uw$ is not an edge of any triangle in $G-uv$.
Therefore $z \in V(D^*)\setminus V(G)$ and $z$ has exactly two in-neighbors $u$ and $w$ which belong to $V(G-uv)$ by Lemma~\ref{lem:contradiction triangle outside}.
Now we add an arc $(v, z)$ to $D^*$ and denote the resulting digraph by $D$.
Then $D$ is an acyclic digraph satisfying that $V(D) \setminus V(G)=V(D^*) \setminus V(G-uv)$ and $P(D)$ contains $G$ as an induced subgraph.

{\it Subcase 1-2}. None of $uw$ and $vw$ is a cared edge of $P(D^*)$.
Then one of $(u,w)$ and $(w,u)$ and one of $(v,w)$ and $(w,v)$ belong to $A(D^*)$.
Since $D^*$ is acyclic, we take an acyclic labeling $\ell$ of $D^*$.
If $w$ has the least $\ell$-value among $u$, $v$, and $w$, then $(u, w) \in A(D^*)$ and $(v, w) \in A(D^*)$, and so $uv$ is an edge of $G-uv$, which is a contradiction.
Thus $u$ or $v$ has the least $\ell$-value among $u$, $v$, and $w$.
Without loss of generality, we may assume that $u$ has the least $\ell$-value among $u$, $v$, and $w$.
Then $(w, u) \in A(D^*)$.
Since $uw$ is not an edge of any triangle of $G-uv$, $w$ is the only in-neighbor of $u$ in $D^*$ that belongs to $V(G-uv)$ by Lemma~\ref{lem:contradiction triangle}.
Now we add an arc $(v, u)$ to $D^*$ to obtain an acyclic digraph $D$.
Then it is easy to check that $V(D) \setminus V(G)=V(D^*) \setminus V(G-uv)$ and $P(D)$ contains $G$ as an induced subgraph.

Since $D^*$ is an optimal phylogeny digraph of $G-uv$, $|V(D^*)\setminus V(G-uv)| = p(G-uv)$.
Then, since $V(D) \setminus V(G)=V(D^*) \setminus V(G-uv)$ in each subcase, $|V(D) \setminus V(G)| = p(G-uv)$.
Therefore, by \eqref{eqn:p(G-uv)}, $|V(D) \setminus V(G)| \le |E(G)|-|V(G)|-t(G)+1$ in each subcase and hence $p(G) \le |E(G)|-|V(G)|-t(G)+1$.

{\it Case 2}. There is a diamond in $G$.
Let $y$ and $w$ be nonadjacent vertices and $\{x,y,z,w\}$ be a vertex set which forms a diamond $\Lambda$ in $G$.
Now let $G^*=G-\{xz, yz, wz\}$ and $D^*$ be an optimal phylogeny digraph for $G^*$.
Then $G^*$ is still $K_4$-free graph and its diamonds are mutually edge-disjoint.
Suppose that there exists an edge of $\Lambda$ on a triangle $T$ distinct from the triangles $xyzx$ and $xwzx$.
Since $G$ is $K_4$-free, $T$ and $xyzx$ or $T$ and $xwzx$ form a diamond.
However, the resulting diamond shares an edge with $\Lambda$ and we reach a contradiction.
Therefore none of edges on $\Lambda$ is on a triangle in $G^*$.
Thus
\begin{equation}\label{eqn:number of e,v,t}
|E(G^*)|=|E(G)|-3, \quad |V(G^*)|=|V(G)|, \quad \text{and} \quad t(G^*)=t(G)-2.
\end{equation}
Furthermore, by Lemma~\ref{lem:contradiction triangle},
\begin{itemize}
  \item[($\dag$)] $u$ is the only in-neighbor of $v$ that belongs to $V(G)$ if $(u, v) \in A(D^*)$ for $(u, v) \in \{(x, y), (y, x), (x, w), (w, x)\}$.
\end{itemize}
In addition, by  Lemma~\ref{lem:contradiction triangle outside}, if $xy$ (resp.\ $xw$) is a cared edge in $P(D^*)$, then
\begin{itemize}
\item[($\star$)] a caring vertex of $xy$ (resp.\ $xw$) belongs to $V(D^*) \setminus V(G^*)$ (consequently $V(D^*) \setminus V(G)$) and $x$ and $y$ (resp.\ $x$ and $w$) are the only in-neighbors in $D^*$ of the caring vertex that belong to $V(G)$.
\end{itemize}

{\it Subcase 2-1}. $G^*$ is disconnected.
Then it has exactly two components $G_1$ and $G_2$ which contains $z$.
Obviously $G_i$ is connected and $K_4$-free, and the diamonds in $G_i$ are mutually edge-disjoint for each $i=1$, $2$.
Thus, by the induction hypothesis, $p(G_1) \le |E(G_1)|-|V(G_1)|-t(G_1)+1$ and $p(G_2) \le |E(G_2)|-|V(G_2)|-t(G_2)+1$.
Then
\begin{align}\label{eqn:p(G-xywz)}
p(G^*)&=p(G_1)+p(G_2) \notag \\
&\le (|E(G_1)|-|V(G_1)|-t(G_1)+1)+(|E(G_2)|-|V(G_2)|-t(G_2)+1) \notag\\
&=|E(G^*)|-|V(G^*)|-t(G^*)+2 \notag \\
&=(|E(G)|-3)-|V(G)|-(t(G)-2)+2 \notag \\
&=|E(G)|-|V(G)|-t(G)+1.
\end{align}
by \eqref{eqn:number of e,v,t} and Lemma~\ref{lem:union}.

Suppose that both of $xy$ and $xw$ are cared edges of $P(D^*)$.
Then $x$ and $y$ (resp.\ $x$ and $w$) have a common out-neighbor $a$ (resp.\ $b$) in $D^*$.
Now we add arcs $(z, a)$ and $(z, b)$ to $D^*$ to obtain a digraph $D$.

Suppose that either $xy$ or $xw$ is cared edge of $P(D^*)$.
Without loss of generality, we may assume that $xy$ is a  cared edge of $P(D^*)$.
Then $xw$ is not a cared edge of $P(D^*)$, and so either $(x, w) \in A(D^*)$ or $(w, x) \in A(D^*)$.
Since $xy$ is a cared edge, $x$ and $y$ have a common out-neighbor $c$ in $D^*$.
We construct a digraph $D$ from $D^*$ by adding the arcs $(z, c)$, and $(z, w)$ if $(x, w) \in A(D^*)$; $(z, x)$ if $(w, x) \in A(D^*)$.

Now suppose that none of $xy$ and $xw$ is a cared edge of $P(D^*)$.
Then either $(x,y) \in A(D^*)$ or $(y, x) \in A(D^*)$, and either $(x, w) \in A(D^*)$ or $(w, x) \in A(D^*)$.
Since $y$ and $w$ are not adjacent in $G^*$, $(y, x) \notin A(D^*)$ or $(w, x) \notin A(D^*)$.
We add the arcs to $D^*$ as follows:
$(z, x)$ and $(z, w)$ if $(y, x) \in A(D^*)$ and $(x, w) \in A(D^*)$; $(z, y)$ and $(z, x)$ if $(x, y) \in A(D^*)$ and $(w, x) \in A(D^*)$;
$(z, y)$ and $(z, w)$ if $(x, y) \in A(D^*)$ and $(x, w) \in A(D^*)$;
Let $D$ be the resulting digraph.

We have constructed a digraph $D$ from $D^*$ in each of the three cases above.
By ($\dag$) and ($\star$), $P(D)$ contains $G$ as an induced subgraph in each case.
By ($\star$), the outdegree of a caring vertex is zero in $D^*$ (we recall that we assumed that the outdegree of any vertex belonging to only optimal phylogeny digraph is zero).
Moreover, since $G_1$ and $G_2$ are the components of $G^*$, there is no arc between a vertex in $G_1$ and a vertex in $G_2$ in $D^*$.
Therefore $D$ is acyclic in each case.
Furthermore, $D^*$ is an optimal phylogeny digraph for $G^*$ and the added arcs have tails in $V(G)$.
Thus we may conclude that $D$ is a phylogeny digraph for $G$.

Since we did not add any new vertex to construct $D$ from $D^*$, $V(D) \setminus V(G)=V(D^*) \setminus V(G^*)$.
Since $D^*$ was chosen as an optimal phylogeny digraph for $G^*$, $p(G^*)=|V(D^*)\setminus V(G^*)|$.
Thus
\[
p(G) \le |V(D) \setminus V(G)| = |V(D^*)\setminus V(G^*)| = p(G^*) \le |E(G)|-|V(G)|-t(G)+1
\]
by \eqref{eqn:p(G-xywz)}.

{\it Subcase 2-2}. $G^*$ is connected.
Clearly $G^*$ is $K_4$-free and its diamonds are mutually edge-disjoint.
Thus, by the induction hypothesis,
\begin{align}\label{eqn:p(G-xywz)-2}
p(G^*)& \le |E(G^*)|-|V(G^*)|-t(G^*)+1 \notag\\
&=(|E(G)|-3)-|V(G)|-(t(G)-2)+1 \notag \\
&=|E(G)|-|V(G)|-t(G)
\end{align}
where the first equality holds by \eqref{eqn:number of e,v,t}.

Suppose that one of $xy$ and $xw$ is a cared edges of $P(D^*)$.
Without loss of generality, we may assume that $xy$ is a cared edge of $P(D^*)$.
Then $x$ and $y$ have a common out-neighbor $a$ in $D^*$.
We construct a digraph $D$ from $D^*$ by adding the vertex $b$ and the arcs $(z, a)$, $(z, b)$, $(x, b)$, and $(w, b)$.

Now suppose that none of $xy$ and $xw$ is cared edge of $P(D^*)$.
Then either $(x,y) \in A(D^*)$ or $(y, x) \in A(D^*)$, and either $(x, w) \in A(D^*)$ or $(w, x) \in A(D^*)$.
Since $y$ and $w$ are not adjacent in $G^*$, $(y, x) \notin A(D^*)$ or $(w, x) \notin A(D^*)$.
We construct a digraph $D$ from $D^*$ as follows:
$V(D)=V(D^*)\cup\{c\}$;
we alter the arcs incoming toward to $z$ in $D^*$ so that they go toward to $c$ in $D$ and add an arc $(z, c)$; add arcs $(z, x)$ and $(z, w)$ if $(y, x) \in A(D^*)$ and $(x, w) \in A(D^*)$; $(z, y)$ and $(z, x)$ if $(x, y) \in A(D^*)$ and $(w, x) \in A(D^*)$; $(z, y)$ and $(z, w)$ if $(x, y) \in A(D^*)$ and $(x, w) \in A(D^*)$, i.e.
\[
A(D)=\begin{cases}
       A(D') \cup \{(z, c),(z, x),(z, w)\} & \mbox{if $(y,x), (x,w) \in A(D^*)$} \\
       A(D') \cup \{(z, c),(z, x),(z, y)\} & \mbox{if $(x,y), (w,x) \in A(D^*)$}  \\
       A(D') \cup \{(z, c),(z, y),(z, w)\} & \mbox{if $(x,y), (x,w) \in A(D^*)$}
     \end{cases}
\]
where $D'$ is the digraph with $V(D')=V(D^*)\cup\{c\}$ and $A(D')= \left(A(D^*) \setminus \{(u, z)\in A(D^*) \mid u \in V(D^*)\} \right) \cup \{(u, c) \mid u \in V(D^*) \text{ and } (u, z) \in A(D^*)\}$.

We have constructed a digraph $D$ from $D^*$ in each of the two cases above.
By ($\dag$) and ($\star$), $P(D)$ contains $G$ as an induced subgraph in each case.
By ($\star$), the outdegree of a caring vertex is zero in $D^*$.
Therefore adding arcs $(z, a)$, $(z, b)$, $(x, b)$, and $(w, b)$ to $D^*$ does not create a directed cycle in the first case.
Since $z$ has indegree zero in the second case, adding arcs with $z$ as a tail does not create a directed cycle.
Therefore $D$ is acyclic in each case.
Furthermore, $D^*$ is an optimal phylogeny digraph for $G^*$ and the added arcs have tails in $V(G)$.
Thus we may conclude that $D$ is a phylogeny digraph for $G$.

Since we added exactly one vertex to construct $D$ from $D^*$, $|V(D) \setminus V(G)|=|V(D^*) \setminus V(G^*)|+1$.
Since $D^*$ was chosen as an optimal phylogeny digraph for $G^*$, $p(G^*)=|V(D^*)\setminus V(G^*)|$.
Thus
\[
p(G) \le |V(D) \setminus V(G)| = |V(D^*)\setminus V(G^*)|+1 = p(G^*)+1 \le |E(G)|-|V(G)|-t(G)+1
\]
where the last inequality holds by \eqref{eqn:p(G-xywz)-2}.

Now we prove the ``especially'' part.
Clearly $V(G^-)=V(G)$.
Since the diamonds in $G$ are mutually edge-disjoint,
\begin{equation}\label{eqn:numberofedges for bdd}
|E(G^-)| = |E(G)| - 3(t(G)-2d(G)) - 5d(G) = |E(G)|-3t(G)+d(G).
\end{equation}
Suppose that $G^-$ is connected.
Since $G^-$ is triangle-free,
\[
p(G^-)=|E(G^-)|-|V(G^-)|+1
\]
by Theorem~\ref{thm:no triangle}.
Substituting $|V(G^-)|=|V(G)|$ and $|E(G^-)|$ given in \eqref{eqn:numberofedges for bdd} into the above equality results in
\begin{equation}\label{eqn:p(G^-)}
p(G^-) = |E(G)|-|V(G)|-3t(G)+d(G)+1.
\end{equation}
Let $D^-$ be an optimal phylogeny digraph for $G^-$.
Now we add $t(G)$ vertices to $D^-$ and arcs in such a way that each added vertex takes care of only the edges on a triangle and two triangle edges on distinct triangles are taken care of by distinct added vertices.
Obviously the resulting digraph $D$ is a phylogeny digraph for $G$ and so
\begin{align*}
p(G) & \le |V(D) \setminus V(G)| = |V(D^-) \setminus V(G^-)|+t(G)   \\
   & =p(G^-)+t(G) \le |E(G)|-|V(G)|-2t(G)+d(G)+1
\end{align*}
where the last inequality holds by \eqref{eqn:p(G^-)}.
Consequently, we have shown that $p(G)=|E(G)|-|V(G)|-2t(G)+d(G)+1$ if $G^-$ is connected.

Now suppose that $G^-$ has exactly $r:=2t(G)-d(G)+1$ components $H_1$, $\ldots$, $H_r$.
For each component $H_i$ of $G^-$, $p(H_i)=|E(H_i)|-|V(H_i)|+1$ by Theorem~\ref{thm:no triangle}.
By Lemma~\ref{lem:union},
\[
p(G^-)=\sum_{i=1}^{r}p(H_i)=\sum_{i=1}^{r}(|E(H_i)|-|V(H_i)|+1).
\]

Since $|V(G^-)|=\sum_{i=1}^{r}|V(H_i)|$ and $|E(G^-)|=\sum_{i=1}^{r}|E(H_i)|$,
\[
p(G^-)=|E(G^-)|-|V(G^-)|+r \quad \text{or} \quad p(G^-)=|E(G^-)|-|V(G^-)|+2t(G)-d(G)+1.
\]
By \eqref{eqn:numberofedges for bdd},
\begin{equation}\label{eq:equalities}
p(G^-)=(|E(G)|-3t(G)+d(G))-|V(G)|+2t(G)-d(G)+1 = |E(G)|-|V(G)|-t(G)+1.
\end{equation}
We denote by $L$ the graph obtained from $G$ by attaching a new pendant vertex to each vertex of $G$.
It is easy to see that the graph obtained from $G^-$ by attaching a new pendant vertex to each vertex of $G^-$ is $L^-$.
Now
\begin{equation} \label{eq:pendant}
p(G)=p(L) \quad \text{and} \quad p(G^-)=p(L^-)
\end{equation}
by Corollary~\ref{cor:pendant}.
Moreover, a maximal clique of $L^-$ is an edge which is an edge of $G^-$ or a newly added edge incident to a pendant vertex. By the definition of $G^-$, each edge in $G^-$  maximal clique of $G$. Therefore a maximal clique of $L^-$ is a maximal clique of $L$. Thus $p(L) \ge p(L^-)$ by by Corollary~\ref{cor:lower bound}. Then  $p(G) \ge p(G^-)$ by \eqref{eq:pendant}. Therefore $p(G)) \ge |E(G)|-|V(G)|-t(G)+1$ by \eqref{eq:equalities}.
Accordingly, we have shown that $p(G)=|E(G)|-|V(G)|-t(G)+1$ if $G^-$ has exactly $2t(G)-d(G)+1$ components.
\end{proof}

The graphs $G_1$ and $G_2$ given in Figure~\ref{fig:triangle tight} are examples for $G_1^-$ is connected and $G_2^-$ has $2t(G_2)-d(G_2)+1$ components, which implies that the lower bound and the upper bound both in Theorem~\ref{thm:triangle and diamond} are achievable.


\begin{figure}
\begin{center}
\begin{tikzpicture}[x=1.0cm, y=1.0cm]

   \vertex (b1) at (0,0.5) [label=left:$$]{};
   \vertex (b2) at (0,-0.5) [label=right:$$]{};
   \vertex (b3) at (1,0) [label=left:$$]{};
   \vertex (b4) at (2,0) [label=left:$$]{};
   \vertex (b5) at (3,0.5) [label=left:$$]{};
   \vertex (b6) at (3,-0.5) [label=left:$$]{};
   \vertex (b7) at (4,0) [label=left:$$]{};
   \vertex (b8) at (2,1.8) [label=left:$$]{};

   \vertex (b9) at (1,1.15) [label=left:$$]{};
   \vertex (b10) at (1,0.65) [label=left:$$]{};
   \vertex (b11) at (1.5,0.9) [label=left:$$]{};
   \vertex (b12) at (2,0.9) [label=left:$$]{};
   \vertex (b13) at (2.5,1.15) [label=left:$$]{};
   \vertex (b14) at (2.5,0.65) [label=left:$$]{};
   \vertex (b15) at (3,0.9) [label=left:$$]{};

    \path
 (b1) edge [-,thick] (b2)
 (b1) edge [-,thick] (b3)
 (b2) edge [-,thick] (b3)
 (b3) edge [-,thick] (b4)
 (b4) edge [-,thick] (b5)
 (b4) edge [-,thick] (b6)
 (b5) edge [-,thick] (b6)
 (b5) edge [-,thick] (b7)
 (b6) edge [-,thick] (b7)
 (b8) edge [-,thick] (b9)
 (b8) edge [-,thick] (b10)
 (b8) edge [-,thick] (b11)
 (b8) edge [-,thick] (b12)
 (b8) edge [-,thick] (b13)
 (b8) edge [-,thick] (b14)
 (b8) edge [-,thick] (b15)

 (b9) edge [-,thick] (b1)
 (b10) edge [-,thick] (b2)
 (b11) edge [-,thick] (b3)
 (b12) edge [-,thick] (b4)
 (b13) edge [-,thick] (b5)
 (b14) edge [-,thick] (b6)
 (b15) edge [-,thick] (b7)
;
 \draw (2, -1) node{$G_1$};
\end{tikzpicture}
\qquad \qquad
\begin{tikzpicture}[x=1.0cm, y=1.0cm]

   \vertex (b1) at (0,0.5) [label=left:$$]{};
   \vertex (b2) at (0,-0.5) [label=right:$$]{};
   \vertex (b3) at (1,0) [label=left:$$]{};
   \vertex (b4) at (2,0) [label=left:$$]{};
   \vertex (b5) at (3,0.5) [label=left:$$]{};
   \vertex (b6) at (3,-0.5) [label=left:$$]{};
   \vertex (b7) at (4,0) [label=left:$$]{};

    \path
 (b1) edge [-,thick] (b2)
 (b1) edge [-,thick] (b3)
 (b2) edge [-,thick] (b3)
 (b3) edge [-,thick] (b4)
 (b4) edge [-,thick] (b5)
 (b4) edge [-,thick] (b6)
 (b5) edge [-,thick] (b6)
 (b5) edge [-,thick] (b7)
 (b6) edge [-,thick] (b7)

;
 \draw (2, -1) node{$G_2$};
\end{tikzpicture}

\end{center}
\caption{The graphs $G_1$ and $G_2$ showing that the lower bound and the upper bound given in Theorem~\ref{thm:triangle and diamond}, respectively, are sharp.}
\label{fig:triangle tight}
\end{figure}
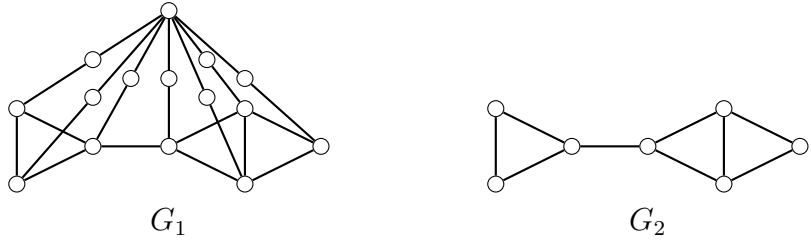

Wu~{\it et al.}~\cite{Wu2019} showed that the difference between the phylogeny number and the competition number of a graph can be any integer greater than or equal to $-1$ and asked about the difference for a connected graph.
We answer their question as follows.

The \emph{Cartesian product} of two graphs $G_1$ and $G_2$ is denoted by $G_1 \times G_2$ and has the vertex set $V(G_1) \times V(G_2)$ and has an edge $(u_1, u_2)(v_1, v_2)$ if and only if either $u_1=v_1$ and $u_2v_2$ is an edge of $G_2$ or $u_2=v_2$ and $u_1v_1$ is an edge of $G_1$.

\begin{Thm}\label{thm:main}
For any nonnegative integer $l$, there is a connected graph $G$ satisfying $p(G) - k(G)+1=l$.
\end{Thm}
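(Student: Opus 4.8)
The plan is to produce, for each $l\ge 0$, one explicit connected graph $G_l$ whose phylogeny number is pinned down by the machinery already in hand and whose competition number is computed directly, arranged so that $p(G_l)-k(G_l)+1=l$. The guiding observation is that triangle-free connected graphs are useless beyond $l=0$: such a graph $H$ has $p(H)=|E(H)|-|V(H)|+1$ by Theorem~\ref{thm:no triangle} and $k(H)=|E(H)|-|V(H)|+2$, so $p(H)-k(H)+1=0$ always. The entire difficulty is therefore to push $p$ strictly above its edge-clique-cover value while holding $k$ down; attaching triangles to a triangle-free skeleton does exactly this, raising $p$ to the upper bound of Theorem~\ref{thm:triangle and diamond} without raising $k$.

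Concretely, I would take $G_l$ to be the complete bipartite graph $K_{2,l+1}$ with $l$ \emph{pendant triangles} glued on: for each of $l$ of the $l+1$ vertices in the large part, add two new vertices forming a triangle with it. On the phylogeny side this is clean. Each pendant triangle is a $K_3$-block meeting the rest of $G_l$ in a single cut vertex, so Corollary~\ref{cor:add complete} shows that deleting it (keeping the cut vertex) leaves the phylogeny number unchanged; iterating, $p(G_l)=p(K_{2,l+1})$, and since $K_{2,l+1}$ is connected and triangle-free, Theorem~\ref{thm:no triangle} gives $p(K_{2,l+1})=2(l+1)-(l+3)+1=l$. As a cross-check, $G_l$ is $K_4$-free with $l$ edge-disjoint triangles and no diamonds, $|E(G_l)|=5l+2$, $|V(G_l)|=3l+3$, and $G_l^{-}$ has exactly $2l+1=2t(G_l)-d(G_l)+1$ components, so the ``especially'' clause of Theorem~\ref{thm:triangle and diamond} independently forces $p(G_l)=|E(G_l)|-|V(G_l)|-t(G_l)+1=l$.

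The competition side needs $k(G_l)=1$. Since $G_l$ has no isolated vertex and the competition graph of any acyclic digraph must have one, $k(G_l)\ge 1$ is immediate. For the reverse inequality I would exhibit an explicit acyclic digraph $D$ on $V(G_l)\cup\{z\}$ whose competition graph is $G_l$ together with the single isolated vertex $z$. The mechanism, which I checked in the base case $l=1$ (the graph $K_{2,2}$ with one pendant triangle, where an explicit digraph indeed yields $k=1$), is to use the two vertices of each pendant triangle as extra prey: give each triangle one common out-neighbour realizing its three edges, and then distribute the prey for the $2(l+1)$ bipartite edges among the bipartite vertices, the $2l$ triangle vertices, and $z$. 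The counting is exactly right: $|V(D)|=3l+4$, the natural edge clique cover has $3l+2$ cliques, and leaving two source vertices yields precisely $3l+2$ available prey vertices, so a single extra vertex $z$ suffices. Combining the two sides gives $p(G_l)-k(G_l)+1=l-1+1=l$, with the case $l=0$ covered by the same family ($G_0=K_{2,1}=P_3$, $p=0$, $k=1$).

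The main obstacle is this competition-number upper bound. The paper develops tools only for $p$ (Theorem~\ref{thm:triangle and diamond}, Corollary~\ref{cor:add complete}, Lemma~\ref{lem:opsut}, and the rest), and nothing analogous for $k$, so $k(G_l)\le 1$ must be established by an explicitly ordered acyclic digraph. The delicate points are to verify simultaneously that every edge of $G_l$ appears as a competition, that no \emph{spurious} competition edge is created (in particular that a pendant-triangle vertex pressed into service as prey for a bipartite edge does not end up competing with a non-neighbour), and that the chosen orientation is acyclic. Because the bookkeeping scales with $l$, the real work is to describe the prey assignment uniformly in $l$ and prove its correctness by a single global argument or induction, rather than the ad hoc verification that suffices for $l=1$.
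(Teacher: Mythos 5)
Your route is genuinely different from the paper's and the phylogeny-number half of it is complete and correct: for $G_l=K_{2,l+1}$ with $l$ pendant triangles, Corollary~\ref{cor:add complete} strips the triangles one block at a time, and Theorem~\ref{thm:no triangle} then gives $p(G_l)=p(K_{2,l+1})=l$ (your cross-check via the ``especially'' clause of Theorem~\ref{thm:triangle and diamond} is also right). The gap is exactly where you say it is: $k(G_l)\le 1$ is asserted for all $l$ but only verified for $l=1$, and because the count is tight ($\theta_e(G_l)=3l+2$ cliques must be assigned injectively to all but the top two vertices of an acyclic ordering of the $3l+4$ vertices of $D$), the existence of a valid assignment is not automatic and must be exhibited. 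It can be done: order the vertices as $z,\,x_1,y_1,\,x_2,y_2,\ldots,x_l,y_l,\,c_1,\ldots,c_l,\,a,\,c_{l+1},\,b$ (arcs pointing backwards), and assign in-neighbourhoods $N^-(z)=T_1$, $N^-(x_i)=T_{i+1}$ for $i<l$, $N^-(x_l)=\{a,c_1\}$, $N^-(y_i)=\{b,c_i\}$ for all $i$, $N^-(c_j)=\{a,c_{j+1}\}$ for $j\le l$, and $N^-(a)=\{b,c_{l+1}\}$; every in-neighbourhood is a clique of the (unique) minimum edge clique cover, each clique is used once, all members of each assigned clique lie later in the ordering, and $z$ lies in no in-neighbourhood, so the competition graph is exactly $G_l$ plus the isolated vertex $z$. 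Until something of this kind is written down, the proof is incomplete.

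It is worth contrasting this with how the paper sidesteps the issue. The paper glues $K_{l+2}$ to the ladder $P_{l+1}\times P_2$ at one vertex $v_l$: the ladder, being connected and triangle-free, has competition number $l+1$, realized by a digraph with $l+1$ added isolated vertices, and those very isolated vertices are then recycled as the remaining $l+1$ vertices of the clique by giving them and $v_l$ a single common out-neighbour $a_l$. Thus $k(G_l)\le 1$ costs one line, with no prey-assignment bookkeeping, while $p(G_l)=p(K_{l+2})+p(P_{l+1}\times P_2)=l$ follows from Theorem~\ref{thm:v-transitive} since $K_{l+2}$ is vertex-transitive. Your family buys a more elementary graph and a $p$-computation that needs only Corollary~\ref{cor:add complete}, but it pushes all the difficulty into the competition-number side, which is precisely the side the paper's machinery gives you no help with.
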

\begin{proof}
Let $G_0 = K_2$.
Clearly  $p(G_0) - k(G_0)+1=0$.
For each positive integer $l$, let $G_l$ be the graph obtained by identifying a vertex on a complete graph $K_{l+2}$ and a vertex on a Cartesian product of $P_{l+1}$ and $P_2$ denoted by $P_{l+1} \times P_2$ (See Figure~\ref{fig:ladder}).
We call the identified vertex in $G_l$ $v_l$.

Fix a positive integer $l$.
Obviously $P_{l+1} \times P_2$ is triangle-free and so the competition number is $|E(P_{l+1} \times P_2)| - |V(P_{l+1} \times P_2)| + 2 = l+1$ by a well-known theorem that $k(G)=|E(G)|-|V(G)|+2$ for a connected graph $G$.
Then there is an acyclic digraph $D'_l$ whose competition graph is $P_{l+1} \times P_2$ with newly added isolated vertices $b_{1,l}, b_{2,l}, \ldots, b_{l+1,l}$.

Now we define a digraph $D_l$ as follows.
We let
\[
V(D_l) = V(D'_l) \cup \{a_l\} \quad \text{and} \quad A(D_l)=A(D'_l) \cup  \{(v_l, a_l)\} \cup \bigcup_{i=1}^{l+1}\{(b_{i,l},a_l)\}.
\]
Then it is easy to check that $D_l$ is acyclic and the competition graph of $D_l$ is isomorphic to $G_l$ with one isolated vertex.
Thus $k(G_l) \le 1$.
It is known that the competition number of a connected graph is at least one.
Since $G_l$ is connected, $k(G_l) \ge 1$ and so $k(G_l)=1$.

It is easy to see that $K_{l+2}$ and $P_{l+1} \times P_2$ satisfy (i) and (ii) of Theorem~\ref{thm:v-transitive} as subgraphs of $G_l$.
Obviously $K_{l+2}$ is vertex transitive.
Thus, by Theorem~\ref{thm:v-transitive}, $p(G_l)=p(K_{l+2})+p(P_{l+1} \times P_2)$.
It is known that the phylogeny number of a chordal graph is zero, so $p(K_{l+2})=0$.
By Theorem~\ref{thm:no triangle}, $p(P_{l+1} \times P_2)=l$.
Therefore $p(G_l)=l$.
Hence $p(G_l)-k(G_l)+1=l$ for each positive integer $l$.
\begin{figure}
\begin{center}
\begin{tikzpicture}[x=2.0cm, y=2.0cm]

   \vertex (b1) at (0,0) [label=left:$$]{};
   \vertex (b2) at (1,0) [label=right:$$]{};
   \vertex (b3) at (0,1) [label=left:$$]{};
   \vertex (b4) at (1,1) [label=left:$$]{};
   \vertex (b5) at (0,2) [label=left:$$]{};
   \vertex (b6) at (-0.86,1.5) [label=left:$$]{};
%

    \path
 (b1) edge [-,thick] (b2)
 (b1) edge [-,thick] (b3)
 (b2) edge [-,thick] (b4)
 (b3) edge [-,thick] (b4)
 (b3) edge [-,thick] (b5)
 (b3) edge [-,thick] (b6)
 (b5) edge [-,thick] (b6)
%
;
 \draw (0.5, -0.5) node{$G_1$};
\end{tikzpicture}
\qquad \qquad
\begin{tikzpicture}[x=2.0cm, y=2.0cm]

   \vertex (b1) at (0,0) [label=left:$$]{};
   \vertex (b2) at (1,0) [label=right:$$]{};
   \vertex (b3) at (0,1) [label=left:$$]{};
   \vertex (b4) at (1,1) [label=left:$$]{};
   \vertex (b5) at (2,0) [label=left:$$]{};
   \vertex (b6) at (2,1) [label=left:$$]{};
   \vertex (b7) at (0,1.9) [label=left:$$]{};
   \vertex (b8) at (-0.9,1.9) [label=left:$$]{};
   \vertex (b9) at (-0.9,1) [label=left:$$]{};

    \path
 (b1) edge [-,thick] (b2)
 (b1) edge [-,thick] (b3)
 (b3) edge [-,thick] (b4)
 (b2) edge [-,thick] (b4)
 (b2) edge [-,thick] (b5)
 (b4) edge [-,thick] (b6)
 (b5) edge [-,thick] (b6)
 (b3) edge [-,thick] (b7)
 (b3) edge [-,thick] (b8)
 (b3) edge [-,thick] (b9)
 (b7) edge [-,thick] (b9)
 (b8) edge [-,thick] (b9)
 (b7) edge [-,thick] (b8)
;
 \draw (0.5, -0.5) node{$G_2$};
\end{tikzpicture}
\caption{The graphs $G_1$ and $G_2$ defined in the proof of Theorem~\ref{thm:main}.}
\label{fig:ladder}
\end{center}
\end{figure}
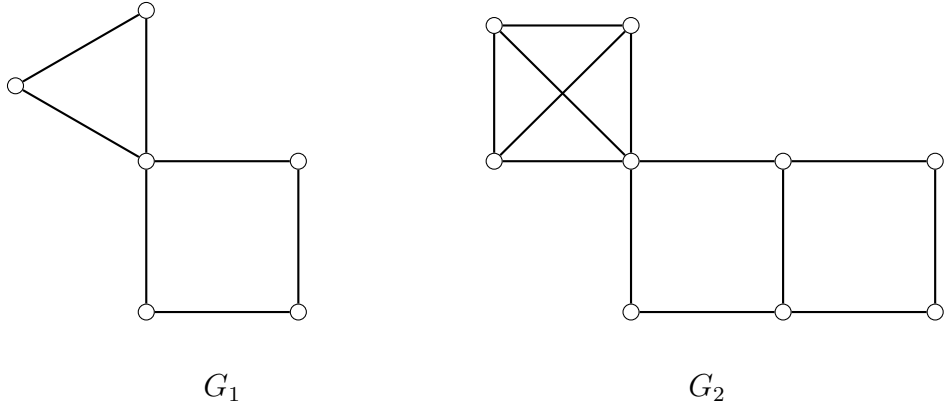
\end{proof}

%
%
\section{Acknowledgement}
This research was supported by
the National Research Foundation of Korea(NRF) funded by the Korea government(MSIT) (No.\ NRF-2017R1E1A1A03070489) and by the Korea government(MSIP) (No.\ 2016R1A5A1008055).

\end{document}